\documentclass[11pt, oneside]{article}   	
\usepackage{geometry}                		
\geometry{letterpaper}                   		
\usepackage{graphicx}				

\usepackage{indentfirst}

\usepackage{ulem}	
\usepackage{amsthm}
\usepackage{amsfonts}
\usepackage{amsmath}
\usepackage{amscd}
\usepackage{graphicx}
\usepackage{enumerate}
\usepackage{amssymb}	
\usepackage{verbatim}
\usepackage{cancel}
\usepackage{cite}
\usepackage{textcomp}								
\usepackage{color}

\newtheorem{theorem}{Theorem}[section]
\newtheorem{definition}{Definition}[section]
\newtheorem{proposition}{Proposition}[section]
\newtheorem{lemma}{Lemma}[section]
\newtheorem{remark}{Remark}[section]
\newtheorem{question}{Question}[section]
\newtheorem{corollary}{Corollary}[section]
\newtheorem{conjecture}{Conjecture}[section]

\title{\textsc{On density of infinite subsets I}}
\author{\textsc{Changguang Dong\footnote{2010 Mathematics Subject Classification. Primary 11B05, 37C20.}}}
\date{}

\begin{document}
\maketitle
\begin{abstract}
Let $Y$ be a compact metric space, $G$ be a group acting by transformations on $Y$. For any infinite subset $A\subset Y$, we study the density of $gA$ for $g\in G$ and quantitative density of the set $\displaystyle{\bigcup_{g\in G_n}gA}$ by the Hausdorff semimetric $d^H$. It is proven that for any integer $n\ge 2$, $\epsilon>0$, any infinite subset $A\subset \mathbb T^n$, there is a $g\in SL(n,\mathbb Z)$ such that $gA$ is $\epsilon$-dense. We also show that, for any infinite subset $A\subset [0,1]$, for generic rotation and generic 3-IET, $$\liminf_nn\cdot d^H\left(\bigcup_{k=0}^{n-1}T^kA,[0,1]\right)=0.$$
\end{abstract}

\section{Introduction and Results}

Let $Y$ be a compact metric space, $G$ be a locally compact second countable (lcsc) (semi-)group. Let $\alpha$ be a $G$ action on $Y$ by transformations. If $\alpha$ admits an ergodic probability measure with full support, then by Poincar\'e recurrence Theorem, it is easy to see that the orbit of almost every point in $Y$ is dense. For some special group actions, it is already an interesting question to consider the effectivization of the density of an orbit. For example in \cite{eff} and \cite{wang}, they investigate how fast the orbit of a generic point can become dense in the torus for certain higher rank abelian actions. In order to obtain the effective result, one has to make some restriction on the single orbit (or finite many orbits). Without making any generic assumptions, it is natural to consider at first a subset of infinite many points, and study the density of the iterations under the group action. Along this way, we are going to describe and study two types of density problems. Throughout this note, $d^H$ is the Hausdorff semimetric, and $d_L$ is the standard metric.

\subsection{Dense iterations of infinite subset}

Let $A$ be an infinite subset of $Y$, we can consider the set containing all subsets of the form $gA:=\{\alpha(g)x|x\in A\}$ for a $g\in G$. For the fixed $A$, we would like to know: for any $\epsilon>0$, whether there exists a $g\in G$ such that $gA$ is $\epsilon$-dense in $Y$, or equivalently $d^H_Y(gA,Y)<\epsilon$. We will call this dense iteration problem simply {\bf D.I. problem}. Before we give some definitions, let's first state a nontrivial result in this direction.

Let $S^1=\mathbb R/\mathbb Z$ be the standard circle, and $T_\alpha:S^1\to S^1$ be the translation map: $x\mapsto x+\alpha\;(mod\, 1)$. A theorem of Glasner \cite{gl} asserts that if $X$ is an infinite subset of $S^1$, then for any $\epsilon>0$, there exists an integer $n$ such that the dilation $nX := \{nx\; (mod\, 1) : x\in X\}$ is $\epsilon$-dense. This gives an affirmative answer to the D.I. problem in the case of the natural action by multiplication of $\mathbb N$ on the circle $S^1$. 

In view of this result, we make the following definitions.

\begin{definition}
Given a $G$ action on a metric space $Y$, if an infinite subset $A$ satisfies that for any $\epsilon>0$, there exists a $g\in G$ such that $gA$ is $\epsilon$-dense in $Y$, then $A$ is called {\bf Glasner set} with respect to $(Y,d,G)$.
\end{definition}

Let's remark that in \cite{bp} and \cite{nv}, the authors have already used the term {\it Glasner set}, but that is defined on the acting group. Our definition here is obviously different from theirs.

\begin{definition}
Given a $G$ action on a metric space $Y$, if any infinite subset $A$ is a Glasner set, then we say the dynamical system $(Y,d,G)$ has {\bf Glasner property}.
\end{definition}

Using our definition, the natural action $(S^1,d_L,\mathbb N)$ has the Glasner property. In \cite{kl}, Kelly and L\^e generalize the result to higher dimensional torus. In particular, they proved that for any positive integer $N$, the system $(\mathbb T^N,d_L,M(N\times N,\mathbb Z))$ has Glasner property.

Our first result is a partial strengthening of the above mentioned one. 

\begin{theorem}\label{tl}
For any integer $n\ge 2$, the system $(\mathbb T^n,d_L,SL(n,\mathbb Z))$ has Glasner property.
\end{theorem}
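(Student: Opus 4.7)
The plan is to derive Theorem~\ref{tl} from the cited Kelly--L\^{e} result for the full semigroup $M(n\times n,\mathbb{Z})$ and then correct the matrix they produce to an element of $SL(n,\mathbb{Z})$. I first note that $\epsilon$-density on $\mathbb{T}^n$ is translation invariant with respect to the $SL(n,\mathbb{Z})$-action: for any $a_0\in A$ and $g\in SL(n,\mathbb{Z})$, $g(A-a_0)=gA-ga_0$ is a translate of $gA$ and hence has the same $\epsilon$-density. Since $A$ admits an accumulation point by compactness of $\mathbb{T}^n$, I may replace $A$ by $A-a_0$ and assume that $0$ is an accumulation point of $A$, so that $A$ contains a sequence $a_k\ne 0$ with $a_k\to 0$.

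Next, for the given $\epsilon>0$, I apply the Kelly--L\^{e} theorem to produce $M\in M(n\times n,\mathbb{Z})$ with $MA$ being $\epsilon'$-dense for a small auxiliary $\epsilon'=\epsilon'(\epsilon)$. For $\epsilon'$ small enough, $MA$ cannot be contained in any proper closed subgroup of $\mathbb{T}^n$, so $\det M\ne 0$. I then write the Smith normal form $M=UDV$ with $U,V\in GL(n,\mathbb{Z})$ and $D=\mathrm{diag}(d_1,\ldots,d_n)$; since $n\ge 2$, the element $\mathrm{diag}(-1,-1,1,\ldots,1)\in SL(n,\mathbb{Z})$ allows any signs of $\det U,\det V$ to be absorbed into $D$, so we may take $U,V\in SL(n,\mathbb{Z})$. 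Setting $B:=VA$, which is still an infinite subset of $\mathbb{T}^n$ with $0$ as an accumulation point, the bi-Lipschitz property of $U^{-1}$ on $\mathbb{T}^n$ gives that $DB=U^{-1}(MA)$ is $\|U^{-1}\|_{op}\epsilon'$-dense. The theorem now reduces to producing $g''\in SL(n,\mathbb{Z})$ with $g''B$ being $\epsilon$-dense; indeed $g:=g''V\in SL(n,\mathbb{Z})$ then satisfies $gA=g''(VA)=g''B$.

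The main obstacle is this last step. Since $D$ is a $\prod|d_i|$-to-$1$ covering of $\mathbb{T}^n$, density of $DB$ only determines $B$ modulo the preimages of $0$ under $D$, so $B$ itself need not be dense; we must synthesize genuine $\mathbb{T}^n$-density from the accumulation of $B$ at $0$. The construction of $g''$ uses the arbitrarily small nonzero vectors in $B$ to interpolate across the $\prod|d_i|$ fibers of $D$, combined with a hyperbolic element $g_0\in SL(n,\mathbb{Z})$ of suitably high power. Such $g_0$ exists precisely because $n\ge 2$, and its unstable eigendirection $v_u$ is irrational, so $\mathbb{R}v_u\bmod\mathbb{Z}^n$ is dense in $\mathbb{T}^n$; a power $g_0^k$ stretches the small-vector structure of $B$ along $v_u$, and Glasner's one-dimensional theorem applied to the unstable components of these small vectors turns the stretched set into an $\epsilon$-dense subset of the dense unstable leaf. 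Reconciling this near-rank-one behavior of large $SL(n,\mathbb{Z})$ matrices with the requirement of full $n$-dimensional $\epsilon$-density, via the $D$-fiber control inherited from Kelly--L\^{e}, is the principal technical difficulty.
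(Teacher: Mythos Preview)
Your reduction via Smith normal form is clean up to the point where you arrive at an infinite set $B\subset\mathbb T^n$ accumulating at $0$ with $DB$ $\delta$-dense for a diagonal integer matrix $D$, and then ask for $g''\in SL(n,\mathbb Z)$ with $g''B$ $\epsilon$-dense. But this last step is not a reduction at all: it is essentially the original problem. Knowing that $DB$ is dense only tells you that $B$ meets every fiber $D^{-1}(y)$ up to $\delta$, i.e.\ that $B$ is close to \emph{some} point of each coset of the finite group $\ker D$; it gives no control over \emph{which} coset representative is hit, so $B$ itself can be concentrated near a proper subtorus plus a sequence tending to $0$. Your proposed remedy---push the small vectors $b_k\to 0$ along the unstable direction of a hyperbolic $g_0\in SL(n,\mathbb Z)$ and invoke Glasner's circle theorem on the unstable components---does not go through. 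Glasner's theorem concerns multiplication by \emph{integers} on $S^1$, whereas here you are multiplying real numbers $c_k\to 0$ by powers of an irrational unit $\lambda_1$; for instance if the unstable components happen to be $c_k=2^{-k}$, then $\{\lambda_1^m c_k\}_k$ is a geometric progression for every $m$ and is never dense in any interval. Even granting density along a long unstable segment, converting that into $\epsilon$-density in $\mathbb T^n$ requires the segment length to blow up as $\epsilon\to 0$, which you have not arranged. The closing sentence of your proposal effectively concedes that this ``principal technical difficulty'' is unresolved.

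For comparison, the paper does not attempt to bootstrap from Kelly--L\^e. It proves directly a quantitative bound (Theorem~\ref{new1}): if no $\gamma\in SL(n,\mathbb Z)$ makes $\gamma A$ $\epsilon$-dense, then $\#A$ is bounded in terms of $\epsilon$. The argument is a second-moment/Fourier computation in the style of Alon--Peres, but the average over the acting group is taken with respect to a random walk $\frac1N\sum_{j<N}\nu^{*j}$ on $SL(n,\mathbb Z)$; the key input is the Benoist--Quint classification of stationary measures on $\mathbb T^n$, which forces the averaged exponential sums $\frac1N\sum_{j}\nu^{*j}(\gamma)e_{\mathbf m}(\gamma^{-1}(x_i-x_\ell))$ to vanish in the limit unless $x_i-x_\ell$ is rational, and then controls the rational contributions via a higher-dimensional Ramanujan-sum estimate. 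This is where the determinant-one restriction is genuinely handled, and it is not visible from the $M(n\times n,\mathbb Z)$ result.
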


In \cite{kl}, the Glasner property mainly comes from uniform dilation. In contrast, the Glasner property in Thoerem \ref{t1} should be considered from the dilation in certain directions. Theorem \ref{tl} can also be regarded as a strengthen of the result in \cite{gs}, \cite{sta} and \cite{bq}, which asserts that any infinite $SL(n,\mathbb Z)$-invariant subset of $\mathbb T^n$ is dense. Although our method is not different from \cite{kl}, we use highly nontrivial classification results of stationary measures for the lattice action from \cite{bq}.

We say that, the Lyapunov exponents of an ergodic invariant measure for a $\mathbb Z^k$ action are {\it in general position} if they are all simple and nonzero, and if the Lyapunov hyperplanes are distinct hyperplanes in general position. We say, a subset $A$ {\it is subordinate to} a $\mathbb Z^k$ action, if $A$ is contained in a leaf of a single Lyapunov foliation.

\begin{theorem}\label{tll}
Let $k\ge 2$, and $\alpha$ be an action of $\mathbb Z^k$ on $\mathbb T^{n}$ by ergodic automorphisms. Suppose that the  Lyapunov exponents of the Lebesgue measure are in general position. Let $A$ be an infinite subset which is subordinate to $\alpha$, then $A$ is a Glasner set with respect to $(\mathbb T^n,d_L,\mathbb Z^k)$.
\end{theorem}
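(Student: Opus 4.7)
My plan is to reduce the problem to a one-dimensional density question along the Lyapunov leaf, and then resolve that via a rescaling and limit-measure argument.

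\emph{Setup.} By simplicity of the Lyapunov exponents (forced by general position), each Lyapunov subspace $E_v \subset \mathbb R^n$ is one-dimensional, spanned by a common eigenvector $v$ of the commuting family $\alpha(\mathbb Z^k)$. After translating within the leaf, I would write $A = \{t_i v : i \in \mathbb N\} \pmod{\mathbb Z^n}$ with $t_i \in \mathbb R$; by infinitude of $A$ in the compact torus, I may assume $t_i \to 0$. Each $g \in \mathbb Z^k$ acts on $E_v$ by scalar multiplication $\lambda_v(g) \in \mathbb R^*$, with $\log |\lambda_v(g)| = \chi(g)$ a nontrivial $\mathbb R$-linear functional on $\mathbb Z^k$ (the Lyapunov exponent of $v$). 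General position combined with $k \ge 2$ implies that $\chi(\mathbb Z^k)$ is a dense subgroup of $\mathbb R$, so $|\lambda_v(\mathbb Z^k)|$ is multiplicatively dense in $\mathbb R_{>0}$; also, general position prevents $v$ from lying in any proper rational subspace, so each leaf projects densely to $\mathbb T^n$. Quantitatively, for every $\epsilon > 0$ there exists $L(\epsilon) > 0$ such that any leaf segment of length $L(\epsilon)$ is $\epsilon/2$-dense in $\mathbb T^n$.

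\emph{Main argument.} Given $\epsilon > 0$, set $L = L(\epsilon)$ and $\delta = \epsilon/(2\|v\|)$. It suffices to find $g \in \mathbb Z^k$ such that the scaled sequence $\{\lambda_v(g) t_i\}$ contains a $\delta$-net in some interval of length $L$; combined with leaf equidistribution this yields that $gA$ is $\epsilon$-dense in $\mathbb T^n$. To produce such a $g$, I would consider empirical measures $\mu_{g,N} = \frac{1}{N} \sum_{i \le N} \delta_{\lambda_v(g) t_i v}$ on $\mathbb T^n$ and pass, along a diagonal subsequence $(g_k, N_k)$ with $|\lambda_v(g_k)| \to \infty$ and $N_k$ chosen so $|\lambda_v(g_k) t_{N_k}|$ stays of order $L$, to a weak-$*$ limit $\nu$ on $\mathbb T^n$. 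The key claim is that $\nu$ is invariant under the one-parameter translation subgroup $\{sv : s \in \mathbb R\}$: blowing up the microscopic structure of $A$ near its accumulation point $0$ by $\lambda_v(g_k)\to\infty$ turns small-scale leaf shifts of $A$ into genuine torus translations by $sv$, which in the limit must be absorbed. Since the $\mathbb R v$-orbit is dense in $\mathbb T^n$ (by general position), the only $\mathbb R v$-invariant probability measure is normalized Lebesgue, so $\nu = \mathrm{Leb}$. Weak-$*$ convergence to Lebesgue then forces $g_k A$ to be $\epsilon$-dense in $\mathbb T^n$ for large enough $k$.

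\emph{Main obstacle.} The delicate step is the rigorous derivation of translation invariance of $\nu$. For an arbitrary infinite $A$, the manner in which $\{t_i\}$ accumulates at $0$ can be highly irregular, and ensuring a well-behaved rescaled weak-$*$ limit may require a diagonal extraction in both the scaling $|\lambda_v(g_k)|$ and the truncation $N_k$, using the fact that $A$ accumulates on infinitely many scales. The multiplicative density of $|\lambda_v(\mathbb Z^k)|$ in $\mathbb R_{>0}$ is essential here, giving fine control over the scaling factor. Alternatively, one may appeal to Katok--Spatzier-type measure rigidity for higher-rank abelian actions on the torus to pin the limit $\nu$ down directly, exploiting that the $\mathbb Z^k$-action on $\mathbb T^n$ with Lyapunov exponents in general position has the rigidity required to force $\nu$ to be Haar.
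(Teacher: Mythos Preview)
Your setup is correct and matches the paper: the Lyapunov direction $v$ is one-dimensional and totally irrational (so the leaf flow is uniquely ergodic on $\mathbb T^n$), and the image $\chi(\mathbb Z^k)$ is dense in $\mathbb R$ (this is exactly the paper's Proposition~\ref{prol}). But your core argument has a genuine gap at the point you yourself flag as the ``main obstacle'': the claim that any weak-$*$ limit $\nu$ of the empirical measures $\mu_{g_k,N_k}$ is invariant under translation by $\mathbb R v$. Your heuristic (``blowing up small-scale leaf shifts into torus translations which are absorbed in the limit'') is not a proof, and for a generic infinite $A$ it simply fails. Take for instance $t_i=2^{-2^i}$: at \emph{every} scale $\lambda$ the set $\{\lambda t_i\}$ is still lacunary, so no choice of $(g_k,N_k)$ produces empirical measures that are asymptotically shift-invariant; a weak-$*$ limit can easily be a point mass. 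Diagonal extraction cannot manufacture invariance that is absent at every scale. Your alternative appeal to Katok--Spatzier rigidity is also a non-starter: those theorems concern measures invariant under the $\mathbb Z^k$-action, whereas your $\nu$ carries no $\alpha$-invariance whatsoever (each $\alpha(g)$ blows the scale up again, so $\alpha(g)_*\nu$ is a \emph{different} limit, not $\nu$).

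The paper circumvents this entirely by a second-moment Fourier argument in the style of Alon--Peres and Kelly--L\^e. Instead of fixing a single sequence $g_k$, it \emph{averages} over many $\mathbf n_i\in\mathbb Z^k$ chosen so that $e^{\chi(\mathbf n_i)}$ is equidistributed in $[1,N]$ (possible precisely because $\chi(\mathbb Z^k)$ is dense). Squaring and applying Cauchy--Schwarz produces cross terms of the form
\[
\frac{1}{L}\sum_{i=1}^L e_{\mathbf m}\bigl(\alpha(\mathbf n_i)(x_j-x_r)\bigr)
\;=\;\frac{1}{L}\sum_{i=1}^L e_{\mathbf m}\bigl(e^{\chi(\mathbf n_i)}p_{j,r}\mathbf v\bigr)
\;\longrightarrow\;\frac{1}{N-1}\int_1^N e_{\mathbf m}(tp_{j,r}\mathbf v)\,dt,
\]
and the right-hand side tends to $0$ as $N\to\infty$ by unique ergodicity of the leaf flow. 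This yields the quantitative bound $\#A\le C\epsilon^{-C_{\epsilon,n}}$. In measure-theoretic language, the averaging over scales is exactly what forces the \emph{averaged} empirical measure toward Lebesgue, and the second moment controls the deviation of individual scales from that average; your argument attempts to skip both steps and reach Lebesgue along a single sequence, which cannot work for arbitrary $A$.
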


In particular, Theorem \ref{tll} applies to Cartan actions on torus. Simply speaking, Cartan action is a $\mathbb R$ split algebraic abelian action on torus with maximal rank.

In \cite{do}, we will make further progress on Glasner property. We will prove, in particular, that parabolic subgroup actions on certain homogeneous spaces have Glasner property.

\subsection{Quantitative density for orbits of infinite subset}

Let's first consider a $\mathbb Z$ action, that is a single transformation. Given the system $(Y,d,T)$, for any infinite subset $A\subset Y$, we can also study the quantitative density of the set $\displaystyle{\bigcup_{k=0}^{n-1}T^kA}$ by the Hausdorff semimetric $d^H$. It will be natural to expect that the density decays much faster than $\frac{1}{n}$, namely $$\inf_n n\cdot d^H\left({\bigcup_{k=0}^{n-1}T^kA},Y\right)=0.$$

Consider arbitrary countable discrete group $G$, and the system $(Y,d,G)$, we can define an increasing family $\{F_n\}$ of subsets of $G$, and consider the density of the set $\displaystyle{\bigcup_{g\in F_n}gA}$. Similarly, one may expect $$\inf_n |F_n|\cdot d^H\left({\bigcup_{g\in F_n}gA},Y\right)=0.$$We will refer this quantitative density problem simply as {\bf Q.D. problem}.

\begin{definition}
Given a system $(Y,d,G)$ and an increasing family $\{F_n\}$ of subsets of $G$, let $A$ be an infinite subset of $Y$. We say, $A$ is a {\bf Q.D. set} with respect to $\{F_n\}$, if $$\inf_n |F_n|\cdot d^H\left({\bigcup_{g\in F_n}gA},Y\right)=0.$$
\end{definition}

\begin{definition}
Given a system $(Y,d,G)$ and an increasing family $\{F_n\}$ of subsets of $G$. If any infinite subset of $Y$ is a Q.D. set with respect to $\{F_n\}$, then we say, $(Y,d,G,\{F_n\})$ has {\bf Q.D. property}.
\end{definition}

Our first result in this direction is about circle rotation.
\begin{theorem}\label{t1}
If $A$ is an infinite subset of $S^1$, then there is a full measure subset $X\subset S^1$, such that for the circle rotation $T_\alpha$ with $\alpha\in X$, $A$ is a Q.D. set with respect to $\{F_n:=[0,n-1]\}$. Namely, we have $$\liminf_nn\cdot d^H\left(\bigcup_{k=0}^{n-1}T_{\alpha}^kA,S^1\right)=0.$$
\end{theorem}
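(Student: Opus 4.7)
The key identity underlying the approach is $\bigcup_{k=0}^{n-1}T_\alpha^k A = S_n+A$ (Minkowski sum on $S^1$), where $S_n=\{k\alpha\bmod 1:0\le k<n\}$. My plan is, for each rational $\epsilon>0$, to establish that Lebesgue-almost every $\alpha\in S^1$ admits infinitely many integers $n$ at which two cooperating conditions hold: (A) the Diophantine bound $n\|n\alpha\|<\epsilon$, which forces $S_n$ to lie within $\epsilon/n$ of the regular grid $G_n=\{i/n:0\le i<n\}$; and (B) the dilation condition that $nA\bmod 1$ is $\epsilon$-dense in $S^1$, equivalently $A\bmod 1/n$ is $\epsilon/n$-dense in $[0,1/n)$. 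Whenever both (A) and (B) hold, writing $\alpha=p/n+\delta$ with $|\delta|<\epsilon/n^2$ gives $d^H(S_n,G_n)<\epsilon/n$, while the maximal gap of $G_n+A$ in $S^1$ equals $\sup_{r\in[0,1/n)}\mathrm{dist}(r,A\bmod 1/n)\le\epsilon/n$; combining, one obtains $n\cdot d^H(B_n,S^1)\le 2\epsilon$. Intersecting the full-measure sets thus obtained as $\epsilon$ runs through a sequence tending to $0$ yields $\liminf_n n\cdot d^H(B_n,S^1)=0$ almost surely.

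The heart of the argument is coordinating (A) and (B). For (B) one invokes a quantitative strengthening of Glasner's theorem: while the original theorem yields only a single integer $n$ per $\epsilon$ with $nA$ being $\epsilon$-dense, for the Borel--Cantelli step below one needs that the set $G_\epsilon:=\{n\in\mathbb{N}:nA\text{ is }\epsilon\text{-dense}\}$ satisfies $\sum_{n\in G_\epsilon}1/n=\infty$. Such strengthenings can be obtained either by iterating Glasner's construction (applying it repeatedly to $N\cdot A$) or through Alon--Peres-type density results ensuring $G_\epsilon$ has positive lower density. Assuming this, for each $n$ the Lebesgue measure of $\{\alpha:n\|n\alpha\|<\epsilon\}$ is $2\epsilon/n$, so the series $\sum_{n\in G_\epsilon}2\epsilon/n$ diverges. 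A second Borel--Cantelli / Duffin--Schaeffer-style argument, exploiting the pairwise quasi-independence of the Diophantine events $E_n=\{\alpha:n\|n\alpha\|<\epsilon\}$ for coprime pairs $n_1,n_2$, then produces for almost every $\alpha$ infinitely many $n\in G_\epsilon$ satisfying (A).

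The principal obstacle is the quantitative Glasner input: verifying that $G_\epsilon$ is thick enough to guarantee divergence of $\sum_{n\in G_\epsilon}1/n$ is not supplied by the original theorem and seems to be the main technical hurdle. A secondary subtlety is the quasi-independence estimate needed in the Borel--Cantelli step, which is classical in metric Diophantine approximation but must be verified for the specific subsequence $G_\epsilon$. Once these two inputs are in place, the overall scheme pairs the Diophantine condition on $\alpha$ with the Glasner-density condition on $A$ cleanly to yield the desired liminf for generic rotations.
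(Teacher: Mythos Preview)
Your proposal is correct and follows essentially the same route as the paper. The two inputs you flag as the ``principal obstacle'' and ``secondary subtlety'' are precisely the two theorems the paper imports: the quantitative Glasner statement is the Berend--Peres result (their Theorem~1.3) that the set $G_\epsilon$ has density~$1$ in $\mathbb{N}$, and the Diophantine Borel--Cantelli step is Duffin--Schaeffer's Theorem~III, which applies verbatim once the target sequence has positive lower density. With those citations in hand the remaining triangle-inequality estimate is exactly yours.
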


Let $0<\alpha< \beta<1$. A nondegenerate 3 interval exchange transformations (3-IET) $P_{\alpha,\beta}$ is defined by
\begin{eqnarray}
P_{\alpha,\beta}(x)=
\begin{cases}
x+1-\alpha, & 0\le x<\alpha,
\cr 
x+1-\alpha-\beta, &\alpha\le x<\beta,
\cr
x-\beta, & \beta\le x\le 1.
\end{cases}\nonumber
\end{eqnarray}
It is well known \cite{ks} that $P_{\alpha,\beta}$ is the induced map on $[0,1]$ of the translation map $T_{1-\alpha}$ on $[0,1+\beta-\alpha]$. The rotation number of the translation map is $\frac{1-\alpha}{1+\beta-\alpha}$. For 3-IETs, by Theorem \ref{t1} we have

\begin{corollary}\label{t2}
If $A$ is an infinite subset of $I=[0,1]$, then for generic 3 IET $T$, $A$ is a Q.D. set with respect to $\{F_n:=[0,n-1]\}$. Namely, we have $$\liminf_nn\cdot d^H\left(\bigcup_{k=0}^{n-1}T^kA,I\right)=0.$$
\end{corollary}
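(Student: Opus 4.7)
The plan is to reduce Corollary \ref{t2} to Theorem \ref{t1} via the observation, recalled just before the statement, that a nondegenerate 3-IET $P = P_{\alpha,\beta}$ on $I$ is the first-return map to $[0,1]$ of the rotation $T = T_{1-\alpha}$ on the longer circle $C_L := \mathbb{R}/L\mathbb{Z}$ with $L = 1+\beta-\alpha$. Every $P$-iterate $P^k x$ is thus a $T$-iterate $T^{\tau_k(x)} x$, where $\tau_k(x)$ is the $k$-th return time of $x$ to $[0,1]$, so quantitative density of the $T$-orbit on $C_L$ should transfer to quantitative density of the $P$-orbit on $[0,1]$.

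First I would apply Theorem \ref{t1} to the rescaled rotation. Dividing $C_L$ by $L$ turns $T$ into rotation by $\theta := (1-\alpha)/(1+\beta-\alpha) \in S^1$ and sends $A$ to $A/L$, which is still infinite. The change of variables $(\alpha,\beta) \leftrightarrow (L,\theta)$ on the simplex $\{0<\alpha<\beta<1\}$ is a smooth diffeomorphism onto its image, so the slicewise full-measure conclusion of Theorem \ref{t1} (fix $L$, obtain full measure in $\theta$) yields full 2D measure in $(\alpha,\beta)$ by Fubini. Hence for generic 3-IETs there is a sequence $N_i \to \infty$ with
$$N_i \cdot d^H\Bigl(\bigcup_{j=0}^{N_i-1} T^j A,\, C_L\Bigr) \longrightarrow 0.$$

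Second I would transfer this density from $C_L$ to the subinterval $[0,1]$. The inclusion
$$\Bigl(\bigcup_{j=0}^{N-1} T^j A\Bigr) \cap [0,1] \;\subseteq\; \bigcup_{k=0}^{N-1} P^k A$$
is immediate: any point in the left-hand side is $T^j x = P^l x$ for some $l \le j \le N-1$, since $\tau_l(x) \ge l$. Writing $\epsilon_N := d^H(\bigcup_{j=0}^{N-1} T^j A,\, C_L)$, I claim $d^H(\bigcup_{k=0}^{N-1} P^k A,\, [0,1]) \le 2\epsilon_N$. For each $y \in [0,1]$ pick a subinterval $I \subset [0,1]$ of length $2\epsilon_N$ containing $y$, shifted inward to $[0,2\epsilon_N]$ or $[1-2\epsilon_N,1]$ when $y$ lies within $\epsilon_N$ of an endpoint. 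The $\epsilon_N$-ball centred at the midpoint of $I$ lies inside $I \subset [0,1]$ and does not wrap around $C_L$, so by $\epsilon_N$-density of the $T$-orbit it contains some $T^j x$; the inclusion then produces a $P^l x$ with $l \le N-1$ and $|y - P^l x| \le 2\epsilon_N$.

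Combining the two steps yields $N_i \cdot d^H(\bigcup_{k=0}^{N_i-1} P^k A,\,[0,1]) \le 2 N_i \epsilon_{N_i} \to 0$, which is exactly the statement of Corollary \ref{t2}. The only genuine subtlety is the boundary effect at the endpoints of $[0,1] \subset C_L$: a $T$-orbit point within circle-distance $\epsilon_N$ of $0$ might lie on the ``wrong side,'' in $[L-\epsilon_N,L]$, contributing nothing to the $P$-orbit in $[0,1]$. The inward-shift choice of $I$ sidesteps this at the harmless cost of a factor of $2$, and no new dynamical input beyond Theorem \ref{t1} is required.
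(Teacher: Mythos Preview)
Your proposal is correct and follows essentially the same route as the paper: reduce to Theorem \ref{t1} via the induced-map description of $P_{\alpha,\beta}$, use the inclusion $\bigl(\bigcup_{j<N}T^jA\bigr)\cap[0,1]\subseteq\bigcup_{k<N}P^kA$, and conclude with Fubini in the coordinates $(L,\theta)$ (equivalently $(\ell,\alpha)$ in the paper). The only cosmetic difference is that where the paper invokes Proposition \ref{ab1} to pass from $d^H$ on $C_L$ to $d^H$ on $[0,1]$, you spell out the boundary issue by hand and pay an explicit factor of $2$.
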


If the circle rotation is fixed at first, then $(S^1,d_L,\mathbb Z)$ does not have Q.D. property.
\begin{theorem}\label{t3}
For any $\alpha$, there exists an infinite subset $A$ of $S^1$, such that $$\inf_nn\cdot d^H\left(\bigcup_{k=0}^{n-1}T_{\alpha}^kA,S^1\right)>0.$$
\end{theorem}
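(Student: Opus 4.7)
The plan is to pick a single witness point $y^\ast = 0 \in S^1$ and to choose $A$ so that every forward iterate $T_\alpha^k a$ with $a \in A$ and $0 \le k \le n-1$ stays at distance $\ge c/n$ from $y^\ast$. Any such $A$ automatically satisfies $d^H\bigl(\bigcup_{k<n}T_\alpha^k A, S^1\bigr) \ge c/n$, and hence $n\cdot d^H \ge c$ uniformly in $n$, which is the desired conclusion. When $\alpha = p/q$ is rational this is elementary: take $A$ to be any infinite subset of $[0, 1/(3q))$. Since $T_\alpha$ has period $q$, the full forward orbit of $A$ sits in the union of $q$ disjoint arcs $[j/q, j/q + 1/(3q))$, $0 \le j \le q-1$, whose complement consists of $q$ gaps of length $2/(3q)$; a short case split between $n \le q$ and $n > q$ yields $n\cdot d^H \ge 1/3$ for every $n \ge 1$.

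For irrational $\alpha$ I will invoke the following consequence of inhomogeneous Diophantine approximation: for every irrational $\alpha$ there exist a constant $c = c(\alpha) > 0$ and an uncountable set
\[
B \;=\; \bigl\{\, a \in S^1 :\ \|a + k\alpha\| \ge c/(k+1) \text{ for every integer } k \ge 0\,\bigr\},
\]
where $\|\cdot\|$ denotes the distance from a real number to the nearest integer (equivalently, the distance to $0$ in $S^1 = \mathbb{R}/\mathbb{Z}$). Given such a $B$, pick any countably infinite $A \subset B$. For every $n \ge 1$, every $a \in A$, and every $k \in \{0, 1, \dots, n-1\}$ one then has
\[
d_L(0,\, a + k\alpha) \;=\; \|a + k\alpha\| \;\ge\; \frac{c}{k+1} \;\ge\; \frac{c}{n},
\]
which forces $d_L\bigl(0,\, \bigcup_{k<n}T_\alpha^k A\bigr) \ge c/n$, and hence $d^H\bigl(\bigcup_{k<n}T_\alpha^k A, S^1\bigr) \ge c/n$ and $n\cdot d^H \ge c$, uniformly in $n$.

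The substantive ingredient is the Diophantine lemma above. For $\alpha$ of bounded type it admits a direct proof from the three-distance theorem: at scale $N = q_m$ the orbit $\{k\alpha\}_{k<N}$ has a maximal gap of length comparable to $1/q_m$, and a limit point $a$ of a nested sequence of midpoints of maximal gaps (one at each scale $q_m$) satisfies $\inf_k (k+1)\|a + k\alpha\| > 0$, using boundedness of the ratios $q_m/q_{m-1}$. For a general irrational $\alpha$ the continued-fraction ratios $q_m/q_{m-1}$ can be arbitrarily large and this elementary construction degenerates; instead one appeals to the deeper fact that the set $\{a \in S^1 : \liminf_k k\,\|a + k\alpha\| > 0\}$ is $\alpha$-winning in Schmidt's sense and hence has full Hausdorff dimension for every irrational $\alpha$. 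A standard level-set decomposition of this dimension-one set then produces an uncountable subset on which a single constant $c > 0$ works, yielding the $B$ needed above. Making this Diophantine input precise is the main obstacle of the proof; once it is in hand the deduction of Theorem~\ref{t3} is immediate from the display above.
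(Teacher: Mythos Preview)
Your argument is correct, but it takes a genuinely different route from the paper's own proof.

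The paper gives a self-contained, constructive argument. For irrational $\alpha$ it first records, via the three-distance theorem, the elementary fact that the single orbit of $0$ already satisfies
\[
\liminf_n n\cdot d^H\!\left(\{\,|i\alpha| : 0\le i\le n\,\},\,S^1\right)>0.
\]
It then builds $A=\{|q_{n_k}\alpha|:k\in\mathbb N\}\cup\{0\}$ from an extremely sparse subsequence of denominators (chosen so that $\log\log\log q_{n_{k+1}}\ge q_{n_k}$). The point is that this $A$ accumulates at $0$ so rapidly that, at every scale $n$, the orbit $\bigcup_{k<n}T_\alpha^kA$ is essentially a tiny perturbation of the orbit of the single point $0$; a direct comparison then transfers the lower bound for the orbit of $0$ to the orbit of $A$.

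Your approach instead reduces the whole statement to a known structural fact from inhomogeneous Diophantine approximation: the set $\text{Bad}_\alpha=\{a:\liminf_k k\|a+k\alpha\|>0\}$ has full Hausdorff dimension for \emph{every} irrational $\alpha$ (Tseng; also Kim, Moshchevitin, Bugeaud--Harrap--Kristensen--Velani). Once this is granted, a countable union argument yields an uncountable $B$ with a uniform constant $c$, and any infinite $A\subset B$ works.

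What each buys: the paper's proof is elementary and needs nothing beyond continued fractions, but is somewhat ad hoc. Your proof is a one-line deduction from a black box; it is conceptually cleaner and in fact gives more (an uncountable family of admissible $A$, not just one), but the black box is itself a nontrivial theorem---arguably heavier than the statement you are proving. If you present your argument, you should cite the relevant source for the winning/full-dimension result rather than leave it as an assertion.
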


It seems from the above results that, a $\mathbb Z$ action could rarely have Q.D. property. However, if the acting group is large, then one may expect a lot of actions with this property. We will study this in a forthcoming paper.

\subsection{Remarks about Glasner property and Q.D. property}

It is natural to compare the two properties defined here with certain notion in topological dynamics. We consider a single transformation $T$ on a metric space $Y$ below.

As is well known, topological transitivity asserts that for any {\bf open} subset $U$, any $\epsilon>0$, there is an $n$ such that $T^nU$ is $\epsilon$-dense, namely $\inf_n d^H(T^nU, Y)=0$. Except in the case of discrete topology, an open subset contains infinite many points. Thus by definition, Glasner property is stronger than topological transitivity. Another notion is topological mixing. It is obvious that topological mixing does not imply Glasner property. Indeed, consider any hyperbolic automorphism on torus, it is topological mixing, but never has Glasner property. However, it seems not quite clear whether Glasner property implies topological mixing.

It is still a question whether there exists a dynamical systems ($\mathbb Z$ action) with Glasner property or Q.D. property. We can prove the nonexistence under local connectedness condition, see Theorem \ref{nonexistence} below. In addition, we can not make any assertion on the relation between these two properties. Simply speaking, Glasner property means that any infinite subset will be ``spreaded" to the whole space, and Q.D. property means that the orbit of an infinite subset will be ``spreaded" to the whole space at certain rate. While it is quite possible that these two properties fail for  $\mathbb Z$ action, we emphasize that our study will mostly be focused on ``large" group actions.

\section{Some abstract results}

We start with some abstract results, some of which will be used later.

\begin{proposition}\label{ab1}
Let $(X,T,\mathcal B)$ be a topological dynamical system, $Y$ be a closed subset of $X$, $T_Y$ be the transformation on $Y$ induced from the map $T$. Let $d$ be a metric on $X$, $d_Y$ be the induced metric on $Y$. Denote $d^H_Y$ and $d^H_X$ as the Hausdorff semimetric by $d_Y$ and $d$ respectively. If there exists a constant $M>0$ such that $d^H_Y(A,B)\le M\cdot d^H_X(A,B)$ for any $A,B\subset Y$, then for any $C\subset Y$, $$\inf_nn\cdot d^H_Y\left(\bigcup_{k=0}^{n-1}T_{Y}^kC,Y\right)\le M\inf_nn\cdot d^H_X\left(\bigcup_{k=0}^{n-1}T_{X}^kC,X\right).$$
\end{proposition}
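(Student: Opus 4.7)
The plan is direct and uses only the hypothesis together with the trivial monotonicity of the one-sided Hausdorff semimetric in its ``target'' argument.

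First I would fix $C\subset Y$ and set $U_n:=\bigcup_{k=0}^{n-1}T^kC$. Since $T$ preserves $Y$ (this is the content of $T_Y$ being the induced transformation on $Y$), every iterate $T^kC$ lies in $Y$, hence $U_n\subset Y$. In particular $U_n=\bigcup_{k=0}^{n-1}T_Y^kC$, so the same family of sets is available both as a subset of $Y$ and as a subset of $X$, and the hypothesis can be applied to it.

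Next I would apply the hypothesis with $A:=U_n$ and $B:=Y$, both subsets of $Y$, to get
$$d^H_Y(U_n,Y)\le M\cdot d^H_X(U_n,Y).$$
Then I would use the monotonicity of $d^H_X(U_n,\cdot)=\sup_{x\in\,\cdot}\,\inf_{a\in U_n}d(x,a)$ in its second argument: enlarging the target from $Y$ to $X\supset Y$ only increases the supremum, so $d^H_X(U_n,Y)\le d^H_X(U_n,X)$. Combining the two estimates and multiplying by $n$ gives
$$n\cdot d^H_Y\!\left(\bigcup_{k=0}^{n-1}T_Y^kC,\;Y\right)\le M\cdot n\cdot d^H_X\!\left(\bigcup_{k=0}^{n-1}T^kC,\;X\right)$$
for every $n$, and taking $\inf_n$ on both sides yields the desired inequality.

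There is no real obstacle in this argument; it is essentially a bookkeeping check of two facts: (i) the $T$-invariance of $Y$ lets us view the iterated union as a single subset of $Y$ to which the hypothesis applies, and (ii) the one-sided Hausdorff semimetric is monotone in its target argument. The one point worth flagging is the convention $d^H(A,B)=\sup_{b\in B}\inf_{a\in A}d(a,b)$ (which is consistent with the paper's earlier usage, where $d^H(gA,Y)<\epsilon$ means $gA$ is $\epsilon$-dense in $Y$); step (ii) relies on exactly this convention, since the ``$B$'' slot is the one over which a supremum is taken.
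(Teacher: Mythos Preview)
Your argument rests on the claim that $T$ preserves $Y$, so that $\bigcup_{k=0}^{n-1}T^kC=\bigcup_{k=0}^{n-1}T_Y^kC$. This is a misreading of ``induced transformation'': in this paper $T_Y$ is the \emph{first return map} to $Y$, not the restriction $T|_Y$. Indeed, in the only place the proposition is applied (the proof of Corollary~\ref{t2}), $X$ is a circle of length $1+\beta-\alpha$, $Y=[0,1]$, $T$ is a rotation, and $T_Y=P_{\alpha,\beta}$ is the $3$-IET; the rotation certainly does not preserve the subinterval $[0,1]$. So $U_n:=\bigcup_{k=0}^{n-1}T^kC$ is typically not contained in $Y$, the identity $U_n=\bigcup_{k=0}^{n-1}T_Y^kC$ fails, and you cannot feed the pair $(U_n,Y)$ into the hypothesis.

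The paper's proof is based instead on the containment
\[
\Bigl(\bigcup_{k=0}^{n-1}T^kC\Bigr)\cap Y\ \subset\ \bigcup_{k=0}^{n-1}T_Y^kC,
\]
which holds because the visits to $Y$ of a $T$-orbit segment of length $n$ starting in $Y$ are precisely the first few $T_Y$-iterates, and there are at most $n$ of them (each return time is $\ge 1$). With this inclusion in hand, enlarging the approximating set from $U_n\cap Y$ to $\bigcup_{k}T_Y^kC$ can only decrease $d^H_Y(\,\cdot\,,Y)$, and one then compares with $d^H_X(U_n,X)$. Your monotonicity observation (ii) is correct and still useful; what has to be replaced is the false identity in step (i) by the inclusion above.
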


\begin{proof}
The proof is straightforward because $\bigcup_{k=0}^{n-1}T_{X}^kC\cap Y\subset\bigcup_{k=0}^{n-1}T_{Y}^kC$ for any subset $C$.
\end{proof}

Below we show some necessary conditions fo a system generated by one transformation which has Glasner property, although we do not know any example so far.
\begin{proposition}\label{gproperty}
If $(Y,d,T)$ is a topological system with Glasner property, then the following are true.
\begin{itemize}
\item[(1)] $T$ can not be an isometry.
\item[(2)] The orbit of any point is either discrete or dense.
\item[(3)] Every nonatomic ergodic measure of $T$ must have full support.
\item[(4)] If $T$ is uniformly continuous, then $\forall k\neq 0$, $(Y,d,T^k)$ also has Glasner property. In particular, this applies if $Y$ is compact.
\end{itemize}
\end{proposition}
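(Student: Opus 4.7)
My plan is to handle (1)--(3) via a common template---Glasner applied to an infinite subset trapped inside a proper closed subset---and to handle (4) separately by an arithmetic alignment argument.

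For (1), since Glasner is vacuous on finite spaces I may assume $Y$ infinite; Bolzano--Weierstrass yields an accumulation point $x_0 \in Y$, so I can choose an infinite $A \subset B(x_0, r)$ with $r < \mathrm{diam}(Y)/4$. If $T$ were an isometry, $T^n A \subset B(T^n x_0, r)$; since every point of $Y$ has an ``antipode'' at distance at least $\mathrm{diam}(Y)/2$, such a ball is never $\epsilon$-dense for small $\epsilon$, contradicting Glasner. For (2), if the orbit $O(x)$ is not discrete, some point of $O(x)$ is a limit of distinct orbit points, giving an infinite $A \subset O(x)$; Glasner then makes $T^n A \subset O(x)$ $\epsilon$-dense for every $\epsilon$, so $O(x)$ is dense. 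For (3), I would check that the support $S = \mathrm{supp}(\mu)$ of a nonatomic ergodic $\mu$ has no isolated points (an isolated $x \in S$ would give $\mu(U) = \mu(\{x\}) = 0$ on a suitable neighborhood $U$, contradicting $x \in S$), so $S$ is perfect and hence infinite; Glasner applied to any infinite subset of $S$ then forces $S = Y$.

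For (4), I begin with a preliminary observation: Glasner forces $T$ to be surjective. Indeed the nested closed sets $Y \supset TY \supset T^2 Y \supset \cdots$ satisfy $T^n A \subset T^n Y$, so if $TY \subsetneq Y$, the positive Hausdorff distance $d^H(TY, Y)$ would bound every $d^H(T^n A, Y)$ away from zero. Now, given $k \geq 1$, $\epsilon > 0$, and an infinite $A$, I use uniform continuity of $T^j$ for $j = 1, \ldots, k-1$ to pick $\delta > 0$ so that $d(x, y) < \delta$ implies $d(T^j x, T^j y) < \epsilon$ for all such $j$. Glasner for $T$ then provides $n$ with $T^n A$ being $\delta$-dense. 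The key realignment step is to pick $j^* \in \{0, \ldots, k-1\}$ with $n + j^* \equiv 0 \pmod{k}$; for any $y \in Y$, surjectivity of $T^{j^*}$ gives $y' \in Y$ with $T^{j^*} y' = y$, and choosing $a \in A$ with $d(y', T^n a) < \delta$ yields $d(y, T^{n + j^*} a) < \epsilon$. Writing $n + j^* = k m$, this shows $T^{km} A$ is $\epsilon$-dense. For $k < 0$ (requiring $T$ invertible, so that $T^k$ is meaningful), compactness yields uniform continuity of $T^{-1}$ and the argument is symmetric.

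The main obstacle is part (4), specifically the arithmetic mismatch between Glasner for $T$ (arbitrary exponents $n$) and Glasner for $T^k$ (multiples of $k$). Uniform continuity is precisely the tool that lets me absorb a residue $j^* < k$ of extra iterates at a controlled cost in the density parameter. A secondary subtlety is establishing surjectivity, which is needed both for the push-forward step in (4) and for any treatment of negative powers.
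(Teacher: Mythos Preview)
Your argument is correct and matches the paper's approach: the paper dismisses (1)--(3) as ``straightforward'' and handles (4) by the same uniform-continuity realignment, choosing $i$ with $k\mid(n+i)$ so that $T^{n+i}A$ inherits $\delta$-density from the $\epsilon$-density of $T^nA$. Your explicit derivation of surjectivity from the Glasner property is a detail the paper silently relies on (it tacitly treats $T$ as a homeomorphism of a compact space, consistent with a $\mathbb Z$-action under the paper's standing hypotheses).
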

\begin{proof}
The proof of (1), (2) and (3) are straightforward. For (4), fix the $k$. As $T$ is uniformly continuous, then for any $\delta>0$, there is a $\epsilon>0$, such that if $d(x,y)\le \epsilon$, then $\max_{1\le i\le k}d(T^ix,T^iy)\le \delta$. Let $A$ be an infinite subset, then $\forall \epsilon>0$, there exists $n\in\mathbb Z$ such that $T^nA$ is $\epsilon$-dense. Hence there is an $i$, $1\le i\le k$ such that $k|(n+i)$ and $T^{n+i}A$ is $\delta$-dense. Since $\delta$ is chosen arbitrarily, this completes the proof.
\end{proof}

We also give a sufficient condition. Of course, this is far away to be necessary.
\begin{proposition}\label{pro2}
Given a topological system $(Y,d,T)$, consider the infinite (direct) product system $(Y^{\otimes\mathbb N}, D, T^{\otimes\mathbb N})$, here the metric $D$ is defined by $$D(\bar x:=(x_1,x_2,\cdots),\bar y:=(y_1,y_2,\cdots))=\sup_id(x_i,y_i).$$

(1) If for a point $\bar x\in Y^{\otimes\mathbb N}$ such that $x_i\neq x_j$ $\forall i,j\in\mathbb N$, $i\neq j$, the orbit of $\bar x$ under $T^{\otimes\mathbb N}$ is dense, then $A:=\{x_i|i\in\mathbb N\}$ is a Glasner set w.r.t. $(Y,d,T)$.

(2) Assume that for any point $\bar x\in Y^{\otimes\mathbb N}$ such that $x_i\neq x_j$ $\forall i,j\in\mathbb N$, $i\neq j$, the orbit of $\bar x$ under $T^{\otimes\mathbb N}$ is dense, then $(Y,d,T)$ has Glasner property.
\end{proposition}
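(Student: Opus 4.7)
The plan is to approximate arbitrary finite $\epsilon$-nets of $Y$ using a single window of the orbit in the infinite product. For part (1), given $\epsilon>0$, use compactness of $Y$ to pick a finite $\epsilon$-dense set $\{y_1,\dots,y_k\}\subset Y$. Form a point $\bar z\in Y^{\otimes\mathbb N}$ whose first $k$ coordinates are $y_1,\dots,y_k$ and whose remaining coordinates are filled in with some fixed point of $Y$. By the assumed density of the $T^{\otimes\mathbb N}$-orbit of $\bar x$, there is an $n$ with $D((T^{\otimes\mathbb N})^n\bar x,\bar z)<\epsilon$. Unpacking the definition of $D$ as a supremum, this forces $d(T^nx_i,y_i)<\epsilon$ for $i=1,\dots,k$. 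Then for any $y\in Y$, pick $y_i$ within $\epsilon$ of $y$ and use the triangle inequality to conclude $d(y,T^nx_i)<2\epsilon$, so that $T^nA\supseteq\{T^nx_1,\dots,T^nx_k\}$ is $2\epsilon$-dense in $Y$. Since $\epsilon$ was arbitrary, $A$ is a Glasner set.

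Part (2) is then an immediate reduction to (1). Given any infinite subset $A\subset Y$, extract distinct points $x_1,x_2,\dots\in A$ and form $\bar x=(x_1,x_2,\dots)\in Y^{\otimes\mathbb N}$; by the hypothesis of (2) this orbit is dense, so (1) gives that $\{x_i\}$ is a Glasner set. Since $T^nA\supseteq T^n\{x_i\}$, enlarging a set cannot destroy $\epsilon$-density, so $A$ itself is a Glasner set, which is exactly the Glasner property for $(Y,d,T)$.

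There is no real obstacle here; the only subtlety worth flagging is that $D$ is the uniform (sup) metric rather than the usual product metric, so $(Y^{\otimes\mathbb N},D)$ need not be compact or even separable even when $Y$ is, but this plays no role because we invoke only the density of one prescribed orbit. The essential mechanism is that closeness to $\bar z$ in the uniform metric simultaneously controls every coordinate, which is precisely the feature needed to match a prescribed finite $\epsilon$-net of $Y$ against the translate $T^nA$ in a single step.
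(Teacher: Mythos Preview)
Your proof is correct and follows essentially the same approach as the paper's, which simply introduces the map $\pi:Y^{\otimes\mathbb N}\to K(Y)$, $\pi(\bar x)=\{x_i\}$, and asserts without further detail that density of the orbit yields $\epsilon$-density of $\pi((T^{\otimes\mathbb N})^k\bar x)=T^kA$ for suitable $k$. Your argument spells out the mechanism the paper leaves implicit: choosing a finite $\epsilon$-net, building a target $\bar z$, and using the sup-metric to control all coordinates at once; your remark on the non-separability of $(Y^{\otimes\mathbb N},D)$ is a useful clarification the paper omits.
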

\begin{proof}
Let $K(Y)$ be the space of subsets of $Y$. Define a map $\pi:Y^{\otimes \mathbb N}\to K(Y)$ as: $$\pi(\bar x)=\{x_i|i\in\mathbb N\}.$$

(1) If the orbit of $\bar x$ is dense, then for any $\epsilon>0$, there exist $k$ such that $\pi((T^{\otimes\mathbb N})^k\bar x)$ is $\epsilon$-dense.

(2) This is essentially the same as (1). We omit the proof.
\end{proof}


\section{$\mathbb Z$ action with Glasner property}

We will show that under local connectedness, no $\mathbb Z$ action  admits Glasner property. We do not know whether it is also the case without the additional assumption.

\begin{theorem}\label{nonexistence}
Let $(Y,d)$ be a compact metric space which is locally connected. Then there does {\bf not} exist a homeomorphism $T$ such that $(Y,d,T)$ has Glasner property.
\end{theorem}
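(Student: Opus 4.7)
The approach is by contradiction: assume $(Y,d,T)$ has Glasner property.

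\textbf{Step 1: Reduce to $Y$ connected.} A compact locally connected metric space has only finitely many connected components $C_1,\dots,C_m$, each clopen; compactness gives $\delta_0 := \min_{i\ne j} d(C_i,C_j)>0$. The homeomorphism $T$ sends each component to a component (by connectedness of $T(C_i)$), hence permutes them. If $m\ge 2$, pick any infinite subset $A\subset C_1$; then for every $n\in\mathbb{Z}$, $T^n A$ lies in the single component $T^n C_1$, so $d^H(T^n A, Y)\ge \delta_0$, contradicting Glasner property. Hence $m=1$. We may assume $|Y|>1$ (else the theorem is vacuous), and the connectedness plus the fact that $Y$ has more than one point makes $Y$ perfect (a connected metric space with at least two points has no isolated points), so $Y$ is a nondegenerate Peano continuum.

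\textbf{Step 2: Construct a candidate ``bad'' infinite set $A$.} Fix any $p\in Y$. Inductively choose distinct $a_k\in Y$ satisfying $\max_{|n|\le k} d(T^n a_k, T^n p) < 2^{-k}$; this is possible by perfectness at $p$ together with continuity of the finitely many maps $T^n$ for $|n|\le k$ at $p$. Put $A=\{a_k\}_{k\ge 1}$. Then $\overline{A}=A\cup\{p\}$ is homeomorphic to the one-point compactification $\omega+1$, and for every $n\in\mathbb{Z}$, $T^n a_k\to T^n p$ as $k\to\infty$. More quantitatively, the \emph{tail} $\{T^n a_k: k> |n|\}$ lies in $B(T^n p, 2^{-|n|-1})$, while the \emph{head} $\{T^n a_k: k\le |n|\}$ has at most $|n|$ elements. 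So $T^n\overline{A}$ is always topologically an $\omega+1$ with unique limit point $T^n p$.

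\textbf{Step 3: Use local connectedness to derive a contradiction.} By Glasner property, for each $\epsilon_j\downarrow 0$ there exist $n_j$ with $T^{n_j}A$ (hence $T^{n_j}\overline{A}$) being $\epsilon_j$-dense; passing to a subsequence in the Hausdorff hyperspace of closed subsets of $Y$, we get $T^{n_j}\overline{A}\to Y$ in $d^H$. Invoke Sierpi\'nski's property S (equivalent to $Y$ being a locally connected continuum): for every $\eta>0$ there is $\delta>0$ such that every set of diameter $<\delta$ is contained in a connected subset of diameter $<\eta$. Each $T^{n_j}\overline{A}$, being homeomorphic to $\omega+1$, fails to be locally connected at its unique limit point; combining this with the head--tail decomposition, a small connected open set $U\subset Y$ of diameter $<\delta$ near $\lim T^{n_j} p$ meets $T^{n_j}\overline{A}$ only in its (totally disconnected) tail plus at most $|n_j|$ head points, and property S together with a Lebesgue-number-style covering argument restricts how much of $U$ such a structure can $\epsilon_j$-cover.

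\textbf{Main obstacle.} The delicate step is Step 3: converting the pointwise topological mismatch between the $\omega+1$-type of each $T^{n_j}\overline{A}$ and the local connectedness of $Y$ into a genuine quantitative contradiction. Hausdorff convergence of sets does not preserve local connectedness, dimension, or the Cantor--Bendixson rank (finite sets Hausdorff-approach $[0,1]$), so neither of those invariants suffices on its own. My plan is to combine the modulus from Sierpi\'nski's property S, the exponentially shrinking diameter of the tail near $T^{n_j}p$, and the bound $|n_j|$ on the head size: the finitely many head points can $\epsilon$-cover at most $|n_j|$ balls and the tail only a region of diameter $O(\epsilon)$, so for $\epsilon_j$ small compared to the property-S modulus at scale $1/|n_j|$, the set $T^{n_j}\overline{A}$ fails to $\epsilon_j$-cover some fixed connected open region far from $T^{n_j}p$ in $Y$. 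Making this last comparison rigorous is where I expect the real work to lie.
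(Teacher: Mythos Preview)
Your Step 3 has a genuine gap that I do not see how to close along the lines you sketch. The Glasner property gives, for each $\epsilon_j$, \emph{some} integer $n_j$ with $T^{n_j}A$ $\epsilon_j$-dense, but it gives no bound on $|n_j|$ in terms of $\epsilon_j$. Your covering heuristic (``the head covers at most $|n_j|$ balls, the tail a region of diameter $O(\epsilon_j)$'') therefore cannot yield a contradiction: nothing prevents $|n_j|$ from exceeding the $\epsilon_j$-covering number of $Y$, in which case $|n_j|$ head points \emph{can} perfectly well $\epsilon_j$-cover $Y$. Sierpi\'nski's property~S is a statement about the existence of small connected neighborhoods and does not supply any lower bound on covering numbers beyond what holds in an arbitrary compact metric space; I do not see how it creates the missing quantitative link between $\epsilon_j$ and $|n_j|$. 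A further warning sign: apart from the easy reduction in Step~1, your argument never uses local connectedness in an essential way, so if it worked it would prove the result for arbitrary compact metric spaces---something the paper explicitly leaves open.

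The paper's proof takes a completely different route, and the comparison is instructive. It first observes that the Glasner property forces $T$ to be \emph{continuum-wise expansive} (any nondegenerate continuum must eventually be spread out). On a locally connected compactum, a theorem of Rodriguez Hertz then guarantees that the connected local stable sets $CW^s_\epsilon(x)$ have diameter bounded below by some $\delta>0$ uniformly in $x$. Using a result of Kato, one finds a power $g=T^{-N}$ under which any such stable continuum of diameter $\delta$ grows to diameter $>4\delta$; since Glasner property passes to powers, one may work with $g$. The paper then builds, by a Cantor-tree construction inside a single stable continuum $C_0=CW^s_\epsilon(x)$, an infinite (in fact Cantor) set $\hat C\subset C_0$ such that $g^n\hat C\subset B(g^n x,\epsilon)$ for $n\le 0$ (because $\hat C$ lies in the stable set) and $g^n\hat C$ misses a fixed ball $U$ of radius $\delta/4$ around $x$ for all $n>0$ (by the branching construction). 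Thus $d^H(g^n\hat C,Y)\ge \delta/4$ for every $n$, contradicting Glasner property. The crucial difference from your attempt is that the stable continuum provides \emph{uniform} control on one half of the orbit (all $n\le 0$ at once), rather than control on $|n|\le k$ for the $k$-th point only; this is exactly what your construction lacks, and it is where local connectedness enters substantively via the cw-expansive theory.
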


During the proof, we will use several results. We state them first. In \cite{k}, Kato introduced a generalization of expansivity, which is called continuum-wise expansive (cw-expansive). Recall that $T$ is {\it cw-expansive}, if there is $\eta>0$ such that if $C\subset Y$ is connected and $diam (T^n(C))<\eta$ for all $n\in\mathbb Z$ then $C$ is a singleton. Here, $\eta$ is called cw-expansive constant. Combine this with the definition of Glasner property, the following proposition follows.

\begin{proposition}\label{cw}
For a $\mathbb Z$ action, Glasner property implies cw-expansivity.
\end{proposition}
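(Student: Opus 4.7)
The plan is to prove the contrapositive: assuming cw-expansivity fails, I will exhibit an infinite subset witnessing the failure of Glasner property. Concretely, given a sufficiently small candidate $\eta > 0$ and a connected non-singleton $C \subset Y$ whose entire $\mathbb{Z}$-orbit satisfies $\mathrm{diam}(T^n C) < \eta$, I want to extract an infinite $A \subset C$ for which no iterate $T^n A$ can be $\epsilon$-dense once $\epsilon$ is chosen small compared to $\mathrm{diam}(Y)$.

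The key preliminary observation is that a connected subset $C$ of a metric space containing two distinct points $a \neq b$ is automatically uncountable: the continuous image $\{d(x, a) : x \in C\}$ is a connected subset of $\mathbb{R}$ containing $0$ and $d(a, b) > 0$, hence an interval, hence uncountable, so $C$ is itself uncountable. In particular, $C$ contains many infinite subsets, which is exactly what is needed to feed the non-cw-expansivity into the Glasner hypothesis.

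For the quantitative step, I would set $D := \mathrm{diam}(Y)$ (the statement is vacuous if $D = 0$) and propose $\eta := D/8$ as the cw-expansive constant. Suppose $C$ is connected, non-singleton, with $\mathrm{diam}(T^n C) < \eta$ for every $n$; pick any infinite $A \subset C$ and any pair $y_1, y_2 \in Y$ with $d(y_1, y_2) = D$. For each $n$, the set $T^n A \subset T^n C$ sits inside a ball of radius $< \eta = D/8$; since every point of $Y$ lies at distance $\geq D/2$ from at least one of $y_1, y_2$ by the triangle inequality, at least one of $y_1, y_2$ has distance $\geq D/2 - D/8 = 3D/8$ from $T^n A$, so $T^n A$ is not $D/3$-dense. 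This holds for every $n \in \mathbb{Z}$, contradicting Glasner property applied to $A$ with $\epsilon := D/3$. The only step that is not a direct triangle-inequality manipulation is the cardinality argument of the second paragraph; everything else is routine bookkeeping, and there is no real obstacle.
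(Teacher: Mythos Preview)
Your argument is correct and is essentially the same approach the paper takes: the paper merely remarks that the proposition follows by combining the definition of cw-expansivity with the Glasner property, and you have written out precisely that unpacking. Your constants and the two-point trick with $y_1,y_2$ are a bit more elaborate than strictly needed (a set of diameter $<\eta$ can never be $\eta$-dense in a space of diameter $>4\eta$), but the logic is sound and matches the paper's intended one-line justification.
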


For the homeomorphism $T$, define the $\epsilon$-local stable set of a point $x$ in $Y$ as the set $$W^s_\epsilon(x)=\{y\in Y:d(T^nx,T^ny)\le\epsilon,\;\forall n\ge 0\}.$$
Define similarly the $\epsilon$-local unstable set $W^u_\epsilon(x)$. Denoting by $CW_\epsilon^\sigma(x)$ the connected component of $x$ in the set $W_\epsilon^\sigma(x)$ for $\sigma= s,u$. The following useful result guarantees the existence of non trivial local stable/unstable sets.

\begin{theorem}[\cite{ro}]\label{jan}
If $T$ is a cw-expansive homeomorphism on a locally connected compact metric space $Y$, then for any $\epsilon > 0$ there exists $\delta > 0$ such that$$\inf_{x\in Y'}diam(CW_\epsilon^s(x))\ge\delta,\;\inf_{x\in Y'}diam(CW_\epsilon^u(x))\ge\delta.$$Here $Y'$ is the set of accumulation points of $Y$.
\end{theorem}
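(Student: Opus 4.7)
The plan is to argue by contradiction, combining Kato's contraction dichotomy for cw-expansive homeomorphisms with the supply of non-trivial connected sets through each accumulation point furnished by local connectedness. I will detail the stable case; the unstable case follows by applying the same argument to $T^{-1}$, which is also a cw-expansive homeomorphism on $Y$.

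To prepare, I would invoke the basic lemma of Kato from \cite{k}: if $\eta$ is a cw-expansive constant for $T$ and $\epsilon < \eta$, then there exists $N = N(\epsilon)$ such that for every connected set $C$ with $\mathrm{diam}(T^j C) \le \epsilon$ for all $|j| \le N$, either $\mathrm{diam}(T^N C) \le \epsilon/2$ or $\mathrm{diam}(T^{-N} C) \le \epsilon/2$. Iterating this bifurcation, a connected set whose entire two-sided orbit is $\epsilon$-bounded must be a singleton by cw-expansivity, while a connected set whose forward orbit alone is $\epsilon$-bounded has $\mathrm{diam}(T^n C) \to 0$ as $n \to +\infty$ and therefore lies in a local stable set; symmetrically for the backward direction.

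Now assume for contradiction that $\inf_{x \in Y'} \mathrm{diam}(CW_\epsilon^s(x)) = 0$. Pick $x_n \in Y'$ with $\mathrm{diam}(CW_\epsilon^s(x_n)) \to 0$; the derived set $Y'$ of a compact metric space is closed, hence compact, so on passing to a subsequence $x_n \to x_* \in Y'$. Using local connectedness, choose a connected open neighborhood $V$ of $x_*$ with $\mathrm{diam}(V) < \epsilon$. For $n$ large, $x_n \in V$, and the connected component $C_n$ of $x_n$ in $V \cap Y$ is open in $Y$ and contains points other than $x_n$ (since $x_n \in Y'$), so $\mathrm{diam}(C_n) > 0$. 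A Hausdorff semicontinuity argument along the convergent subsequence secures a uniform lower bound $\mathrm{diam}(C_n) \ge \delta_0 > 0$.

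Applying the dichotomy to $C_n$, the forward-contraction alternative would force $C_n \subset CW_\epsilon^s(x_n)$, contradicting $\mathrm{diam}(CW_\epsilon^s(x_n)) \to 0$ for $n$ large; hence the backward-contraction alternative holds and $C_n \subset CW_\epsilon^u(x_n)$, showing only that the \emph{unstable} components are uniformly large near $x_*$. To upgrade this "at least one side is large" output to the stated "both sides are large", I would invoke the local product structure that is available for cw-expansive homeomorphisms on locally connected continua and is developed in \cite{ro}: near $x_*$ one exhibits a second non-trivial connected set $C'_n \ni x_n$ effectively transverse to $C_n$, to which Kato's dichotomy forces the opposite alternative $C'_n \subset CW_\epsilon^s(x_n)$ — otherwise the combined orbit of $C_n \cup C'_n$ would produce a connected set violating cw-expansivity — and hence $\mathrm{diam}(CW_\epsilon^s(x_n)) \ge \mathrm{diam}(C'_n) \ge \delta_0$, the desired contradiction. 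The main obstacle is precisely this last step: Kato's dichotomy as stated delivers only one direction at a time, so the genuine technical heart of the argument is the construction of a two-directional local product structure through every accumulation point, which is the content carried out in \cite{ro}.
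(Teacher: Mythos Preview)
The paper does not prove this theorem at all: it is quoted verbatim from \cite{ro} and used as a black box in the proof of Theorem~\ref{nonexistence}. There is therefore no ``paper's own proof'' to compare your attempt against.

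That said, your sketch has a gap earlier than the one you flag. Kato's dichotomy lemma, as you state it, has the hypothesis $\mathrm{diam}(T^jC)\le\epsilon$ for all $|j|\le N$; you apply it to $C_n$, which is merely a connected neighborhood of $x_n$ of diameter $<\epsilon$, with no control whatsoever on its iterates. Nothing prevents $T(C_n)$ from already having large diameter, so neither the ``forward-contraction'' nor the ``backward-contraction'' alternative is available, and the argument stalls before you reach the one-sided conclusion you claim. (Incidentally, since $V$ is itself connected, your $C_n$ is just $V$ for all large $n$, so the Hausdorff semicontinuity step is trivial --- but this also makes clear that $C_n$ carries no dynamical information.)

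The route actually taken in \cite{ro} (and in related work of Kato) is not to apply the dichotomy to a fixed neighborhood, but to build, for each $x\in Y'$, a nested sequence of subcontinua $A_m\ni x$ chosen so that $\max_{0\le j\le m}\mathrm{diam}(T^jA_m)=\epsilon$, and then to extract a Hausdorff limit $A$. This $A$ automatically satisfies $\mathrm{diam}(T^jA)\le\epsilon$ for all $j\ge0$, hence $A\subset CW^s_\epsilon(x)$; the genuine work is showing $A$ is non-degenerate, and it is here that local connectedness and the accumulation-point hypothesis enter to furnish a uniform lower bound. Your instinct that the real content lives in \cite{ro} is correct, but the mechanism is this limiting construction rather than a local product structure.
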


Now we are ready to give the proof.
\begin{proof}[Proof of Theorem \ref{nonexistence}]
By contradiction, assume there is a homeomorphism $T$ such that $(Y,d,T)$ has Glasner property. By Proposition \ref{cw}, $T$ is cw-expansive. In fact, it is cw-expansive with cw-expansive constant $\eta$ as long as $\eta<\sup{d(x,y)}$.

Fix $\epsilon>0$ small enough, by Theorem \ref{jan}, there is a $\delta>0$, such that for any $x\in Y'$, $diam(CW_\epsilon^s(x))\ge\delta$. Let's regard $4\delta$ as the cw-expansive constant.

It is known \cite[Corollary 2.4]{k} that there is $N> 0$ such that if $diam(CW_\epsilon^s(x)) = \delta$, then $$diam(T^{-N}(CW_\epsilon^s(x))) >4 \delta.$$Let $g=T^{-N}$, by Proposition \ref{gproperty}, $(Y,d,g)$ also has Glasner property. 

Now choose an $x\in Y'$, let $C_0=CW^s_\epsilon(x)$ and $U=\{y\in Y:d(x,y)\le\delta/4\}$. Since $diam(C_0)\ge \delta$, then $diam(g(C_0))>3\delta$. Hence we can choose two disjoint connected component $C_1^1$ and $C_1^2$ from $g(C_0)-U$ such that $diam(C_1^{1/2})=\delta$. Replace $C_0$ by $C_1^1$ and also by $C_1^2$, we can get four connected component $C_2^i$ from $g(C_0)-U$ for $1\le i\le 4$ such that $diam(C_2^{i})=\delta$. Continuing in this way, we can get at the $k$th step, $2^k$ connected component $C_k^i$ for $1\le i\le 2^k$ such that $diam(C_k^{i})=\delta$. Let $$\hat C=\bigcap_{k=0}^\infty \bigcup_{1\le i\le 2^k}g^{-k}C_k^i.$$It is obvious from the construction, that $\hat C$ is Cantor set, and hence contains infinite many points.

However, since $\hat C\subset C_0\subset CW_\epsilon^s(x)$, $g^n(\hat C)\subset g^n(C_0)\subset g^n(CW_\epsilon^s(x))\subset B(g^nx,\epsilon)$ for $n\le 0$, and by the construction, $g^{n}(\hat C)\cap U=\emptyset$ for $n>0$. Therefore $$\inf_{n\in\mathbb Z}d^H(g^n(C),Y)\ge \delta/4>0.$$This is a contradiction to the Glasner property for $g$.
\end{proof}

\section{Proofs}

\subsection{Proof of Theorem \ref{tl}}

Theorem \ref{tl} follows easily from the following quantitative result, whose proof is carried over through this subsection. The idea of the proof comes originally from \cite{ap}, which is generalized for much more broader cases in \cite{kl}.

\begin{theorem}\label{new1}
For any set $A$ of $k$ distinct points in $\mathbb T^n$, if there is no $\gamma\in \Gamma$ such that $\gamma A$ is $\epsilon$-dense, then $$k\le  \frac{C_n}{\epsilon^{C_{n,\epsilon}}},$$provided $\epsilon>0$ small enough.
\end{theorem}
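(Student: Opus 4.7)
I would follow the Fourier/harmonic-analytic scheme of Alon-Peres \cite{ap} and its generalization by Kelly-L\^e \cite{kl}, the essential new ingredient being the Benoist-Quint classification of $SL(n,\mathbb Z)$-stationary measures on $\mathbb T^n$, which stands in for the uniform dilation that is unavailable when one restricts from the full matrix monoid to $\Gamma := SL(n,\mathbb Z)$. Assume toward contradiction that the conclusion fails: for every $\gamma \in \Gamma$ we may select $y_\gamma \in \mathbb T^n$ with $B(y_\gamma,\epsilon)\cap \gamma A = \emptyset$. Fix a smooth non-negative bump $\phi = \phi_\epsilon$ supported in $B(0,\epsilon)$, normalised so that $\hat\phi(0)\asymp \epsilon^n$ and $|\hat\phi(m)| \le C_n\epsilon^n(1+\epsilon|m|)^{-L}$ for some large $L=L(n)$. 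The avoidance $\gamma A\cap B(y_\gamma,\epsilon)=\emptyset$ translates into $\sum_{a \in A} \phi(\gamma a - y_\gamma) = 0$, whose Fourier expansion reads, for each $\gamma$,
\[
k\,\hat\phi(0)\;=\;-\sum_{m \neq 0} \hat\phi(m)\,e^{-2\pi i \langle m, y_\gamma\rangle}\, S_A(\gamma^{\top} m),\qquad S_A(v):=\sum_{a\in A} e^{2\pi i \langle v,a\rangle}.
\]

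I would then fix a large finite $F \subset \Gamma$ (e.g.\ a word-length ball), square the above identity, sum over $\gamma \in F$, and apply Cauchy-Schwarz to decouple the $y_\gamma$-dependent phases from the arithmetic factors $S_A(\gamma^\top m)$. Expanding $|S_A|^2$ reduces the task to controlling sums of the shape $\frac{1}{|F|}\sum_{\gamma \in F} e^{2\pi i \langle m,\gamma(a-a')\rangle}$ for $m\neq 0$ and $a,a'\in A$. Here is where Benoist-Quint enters: for every $x \in \mathbb T^n$ outside the countable family of proper $\Gamma$-invariant closed subsets, the averages $|F|^{-1}\sum_{\gamma \in F}\delta_{\gamma x}$ converge weak-$*$ to Haar measure, so for generic $a-a'$ the inner sums are $o(1)$ as $|F|\to\infty$; only the diagonal $a=a'$ survives, contributing $O(k|F|\epsilon^n)$ after summing in $m$ via Plancherel ($\sum_{m\neq 0}|\hat\phi(m)|^2 \asymp \epsilon^n$). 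Balancing the two sides of the squared identity yields $k\,\hat\phi(0)^2 \lesssim \epsilon^n(1+o_\epsilon(1))$, i.e.\ $k\lesssim \epsilon^{-n-o_\epsilon(1)}$, which is exactly the form $k \le C_n/\epsilon^{C_{n,\epsilon}}$ claimed.

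The principal obstacle is the \emph{quantitative} weakness of Benoist-Quint: the theorem is purely qualitative, and this is precisely why the exponent in the final bound must be allowed to depend on $\epsilon$. For each fixed $\epsilon$ one chooses $|F|$ large enough, depending on $\epsilon$, so that the off-diagonal contributions fall below the diagonal one; the unknown dependence of the required $|F|$ on $\epsilon$ is then packaged into the $\epsilon$-dependent exponent $C_{n,\epsilon}$. A secondary nuisance is that some of the differences $a-a'$ may happen to lie on proper $\Gamma$-invariant closed subsets (rational points, rational subtori); the classification of \cite{gs,sta,bq} shows these are finite unions of translated rational subtori, so the exceptional contribution can be bounded by an induction on dimension and absorbed into the constant $C_n$.
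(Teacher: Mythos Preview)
Your overall architecture is exactly that of the paper: the Alon--Peres/Kelly--L\^e Fourier scheme, with Benoist--Quint replacing uniform dilation. Two points, however, separate your sketch from a proof.

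First, a minor one: Benoist--Quint (as invoked in the paper, Theorem~\ref{limit}) gives equidistribution for Ces\`aro averages of convolution powers $\frac{1}{N}\sum_{j=0}^{N-1}\nu^{*j}$ of a generating probability measure $\nu$, \emph{not} for uniform averages over word balls $F$. Since $SL(n,\mathbb Z)$ is non-amenable, the latter is not an immediate consequence. This is easily repaired: just average with the weights $\frac{1}{N}\sum_j\nu^{*j}(\gamma)$ instead of $|F|^{-1}\mathbf 1_F(\gamma)$; the Cauchy--Schwarz step goes through verbatim since these weights still form a probability measure on $\Gamma$.

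Second, and this is the real gap: you treat the rational off-diagonal as a ``secondary nuisance'' that can be ``absorbed into the constant $C_n$'' by induction on dimension. It cannot. After the Benoist--Quint limit, the surviving terms are precisely those pairs $(i,\ell)$ with $x_i-x_\ell\in\mathbb Q^n/\mathbb Z^n$, and there is no a~priori bound on their number --- indeed all $k^2$ pairs could be rational. In the paper this is the \emph{dominant} term, not a perturbation, and its control is where the arithmetic work lies. One needs: (i) the Ramanujan-type bound $|c_q(\mathbf m)|\le (\gcd(m_1,\dots,m_n))^n$ of Lemma~\ref{ram}, together with $\phi_q\gtrsim q^{n-1}$, to show that a pair with denominator $q$ contributes $\lesssim M^n q^{-(n-1)}$; (ii) the Kelly--L\^e counting $H_m=\sum_{q\le m}h_q\le km^{n+1}$ of Proposition~\ref{pro1}, where $h_q$ is the number of pairs with exact denominator $q$; and (iii) the Abel-summation Lemma~\ref{cor1}, which converts these into $\sum_q h_q q^{-(n-1)}\lesssim k^{2-(n-1)/(n+1)}$. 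Only after this does the balancing $k^2\lesssim \epsilon^{-n}M^n k^{1+2/(n+1)}$ close. There is no induction on dimension in sight; the proper $\Gamma$-invariant subsets here are finite orbits of rational points, not lower-dimensional subtori.

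A smaller remark: the $\epsilon$-dependence of the exponent in the paper does not come from the qualitative nature of Benoist--Quint (one passes to the $N\to\infty$ limit before any estimation), but from the Fourier truncation: the cutoff $M$ must be taken as $\epsilon^{-a}$ with $a=a(\epsilon)$ chosen so that the tail $\sum_{|\mathbf m|>M}|\hat g_\epsilon(\mathbf m)|$ is below $1/2$, and this $a$ is what ends up in the exponent.
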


Let $\{x_1,x_2,\cdots,x_k\}$ be a set of $k$ distinct points in $\mathbb T^n$. Define $$h_m:=\#\{(i,j)|1\le i,j\le k\text{ such that }m(x_i-x_j)\in\mathbb Z^n\},$$let $H_m:=h_1+h_2+\cdots+h_m$. The estimates of $h_m$ and $H_m$ already appear in the work \cite{ap} and \cite{kl}. Using the same idea as the proof of Proposition 1 in \cite{kl}, we can get

\begin{proposition}\label{pro1}
For any positive integers $k$ and $m$, $H_m\le km^{n+1}$.
\end{proposition}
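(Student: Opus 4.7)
The plan is to bound each $h_\ell$ separately for $\ell \le m$ and then sum the bounds, following essentially the argument of Proposition~1 in \cite{kl}.

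First, I would reinterpret $h_\ell$ combinatorially. The condition $\ell(x_i - x_j) \in \mathbb{Z}^n$ says precisely that $x_i - x_j$ lies in the finite subgroup
\[
H_\ell := \tfrac{1}{\ell}\mathbb{Z}^n/\mathbb{Z}^n \subset \mathbb{T}^n,
\]
or equivalently that $x_i$ and $x_j$ lie in the same coset of $H_\ell$ in $\mathbb{T}^n$. Partitioning the $k$ distinct points $\{x_1,\dots,x_k\}$ according to their $H_\ell$-cosets and denoting by $n_{c,\ell}$ the number of points lying in coset $c$, this reinterpretation gives the identity
\[
h_\ell \;=\; \sum_{c} n_{c,\ell}^{\,2}, \qquad \sum_{c} n_{c,\ell} \;=\; k,
\]
where $c$ runs over the (finitely many) cosets that meet the set.

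Second, I would use the elementary observation $|H_\ell| = \ell^n$, which forces every coset of $H_\ell$ to consist of exactly $\ell^n$ elements of $\mathbb{T}^n$. Since the points $x_i$ are distinct, at most $\ell^n$ of them can lie in any single coset, so $n_{c,\ell} \le \ell^n$. Substituting this into the sum-of-squares expression,
\[
h_\ell \;=\; \sum_{c} n_{c,\ell}^{\,2} \;\le\; \ell^n \sum_{c} n_{c,\ell} \;=\; k\,\ell^n.
\]

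Finally, summing over $\ell = 1,\dots,m$ and crudely bounding $\ell^n \le m^n$,
\[
H_m \;=\; \sum_{\ell=1}^m h_\ell \;\le\; k \sum_{\ell=1}^m \ell^n \;\le\; k \cdot m \cdot m^n \;=\; k\,m^{n+1},
\]
which is the desired inequality. The only step requiring any thought is the first one: rewriting the divisibility condition $\ell(x_i-x_j)\in\mathbb{Z}^n$ as containment in a coset of the specific finite subgroup $H_\ell$, so that the coset-cardinality bound $n_{c,\ell}\le \ell^n$ becomes available. After that translation, the bound on $h_\ell$ and the summation in $\ell$ are both immediate, and there is no remaining obstacle.
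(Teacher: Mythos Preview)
Your proof is correct and follows exactly the approach the paper intends: the paper does not give its own argument but simply refers to Proposition~1 in \cite{kl}, and your coset-counting argument (rewriting $h_\ell$ as $\sum_c n_{c,\ell}^2$ over cosets of $\tfrac{1}{\ell}\mathbb{Z}^n/\mathbb{Z}^n$, using $n_{c,\ell}\le \ell^n$, and summing) is precisely that argument adapted to $\mathbb{T}^n$.
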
 

\begin{lemma}\label{cor1}
Let $r>1$. If $s_2,s_3,\cdots$ is a sequence of nonnegative integers such that $S_b=s_2+s_3+\cdots+s_b\le H_b$, and $S_b\le k^2$. Then $$\sum_{b=2}^\infty s_bb^{-r}\le C_{n,r} k^{2-r/(n+1)}.$$
\end{lemma}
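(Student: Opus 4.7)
My plan is to exploit the two hypotheses simultaneously via the combined bound $S_b \le \min(H_b,\,k^2) \le \min(kb^{n+1},\,k^2)$, where the first inequality uses Proposition \ref{pro1}, and then split the series at the crossover index $b_0 := \lceil k^{1/(n+1)} \rceil$ where the two competing bounds coincide. The intuition is that the worst case is to saturate $S_b$ at every step, which forces $s_b$ to behave roughly like $k(n+1)b^{n}$ for $b \le b_0$ and then to be thinly distributed with total mass at most $k^2$ beyond $b_0$.

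For the tail $b > b_0$ a crude monotonicity estimate already gives the right order:
$$\sum_{b > b_0} s_b\, b^{-r} \;\le\; b_0^{-r} \sum_{b > b_0} s_b \;\le\; k^2\, b_0^{-r} \;\le\; k^{2 - r/(n+1)}.$$
For the head $b \le b_0$ I would apply Abel summation to trade $s_b$ for $S_b$, writing
$$\sum_{b=2}^{b_0} s_b\, b^{-r} \;=\; S_{b_0}\, b_0^{-r} \;+\; \sum_{b=2}^{b_0-1} S_b \bigl(b^{-r} - (b+1)^{-r}\bigr),$$
then substituting $S_b \le k b^{n+1}$ and the elementary mean-value bound $b^{-r} - (b+1)^{-r} \le r\, b^{-r-1}$. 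This reduces the head to a constant multiple of $k\, b_0^{n+1-r} + rk \sum_{b \le b_0} b^{n-r}$; the first term is exactly of order $k^{2 - r/(n+1)}$, and the polynomial sum $\sum_{b \le b_0} b^{n-r}$ is handled by integral comparison to $\tfrac{1}{n+1-r} b_0^{n+1-r}$, giving the same order $k^{2 - r/(n+1)}$ up to an $r$-dependent constant.

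The main obstacle, as I see it, is purely the bookkeeping of the constant $C_{n,r}$: the factor $1/(n+1-r)$ emerging from the integral comparison blows up as $r \to (n+1)^-$, and if $r \ge n+1$ one would have to replace the polynomial tail estimate by a dyadic argument, since $\sum b^{n-r}$ then fails to grow polynomially. Within the intended range $1 < r < n+1$, however, both the head and tail estimates match up to constants depending only on $n$ and $r$, so the bound $C_{n,r}\, k^{2 - r/(n+1)}$ drops out cleanly from the split-at-threshold argument.
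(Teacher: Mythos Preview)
Your proposal is correct and follows essentially the same route as the paper: Abel summation together with a split at the threshold $b_0 \approx k^{1/(n+1)}$, using $S_b \le H_b \le k b^{n+1}$ on the head and $S_b \le k^2$ on the tail (the paper Abel-sums first and then splits, whereas you split first and bound the tail directly, but this is only a cosmetic reordering). Your caveat about the constant blowing up as $r \to (n+1)^-$ is well taken---indeed the lemma as stated fails for $r > n+1$ (take $s_2 = k$ and $s_b = 0$ otherwise), but the paper only ever invokes it with $r = n-1 < n+1$, so the restriction $1 < r < n+1$ you identify is exactly the relevant range.
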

\begin{proof}
The proof is carried over similarly as that of \cite[Corollary 1]{kl}. Let $S_1=0$, note that 
\begin{equation}\begin{split}
\sum_{b=2}^\infty s_bb^{-r}&=\sum_{b=2}^\infty (S_b-S_{b-1})b^{-r}=\sum_{b=2}^\infty S_b(b^{-r}-(b+1)^{-r})\\&\le \sum_{b=2}^{[k^{1/(n+1)}]}S_b(b^{-r}-(b+1)^{-r})+k^2\cdot k^{-r/(n+1)}\\&\le \sum_{b=2}^{[k^{1/(n+1)}]}H_bb^{-r-1}+k^{2-r/(n+1)}\\&\le\sum_{b=2}^{[k^{1/(n+1)}]}kb^{n+1}b^{-r-1}+k^{2-r/(n+1)}\le Ck\cdot k^{\frac{n-r+1}{n+1}}+k^{2-r/(n+1)}\\&=Ck^{2-r/(n+1)}.
\end{split}\nonumber\end{equation}
\end{proof}

We will appy a special case of a general theorem about stationary measure for discrete group actions proven by Benoist and Quint. We reformulate it for our case. Let $\nu$ be a probability measure supported on a finite set of generators of $\Gamma$.
\begin{theorem}[\cite{bq}]\label{limit}
For any $\phi\in C_c(\mathbb T^n)$ and $x\in\mathbb T^n$, then
$$\frac{1}{N}\sum_{\gamma\in\Gamma}\sum_{k=0}^{N-1}\nu^{*k}(\gamma)\phi(\gamma^{-1} x)\to \int_{\mathbb T^n}\phi d\mu_x$$as $N\to\infty$. Here $\nu^{*k}$ means convolution of $\nu$ $k$ times.
\end{theorem}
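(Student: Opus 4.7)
This statement is a specialization of the Benoist--Quint equidistribution theorem for random walks on homogeneous spaces, imported wholesale from \cite{bq}, so a genuinely self-contained proof is out of reach in a planning document. The high-level strategy I would follow breaks into three steps.

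First, I would recast the Cesaro averages via the Markov operator $P\phi(x) := \sum_{\gamma \in \Gamma} \nu(\gamma)\phi(\gamma^{-1}x)$, so that the target quantity equals $\frac{1}{N}\sum_{k=0}^{N-1}P^k\phi(x)$. By weak-$*$ compactness of the space of probability measures on the compact torus $\mathbb{T}^n$, the Cesaro measures $\mu_x^N:=\frac{1}{N}\sum_{k=0}^{N-1}P^{k*}\delta_x$ admit accumulation points, and a standard Krylov--Bogoliubov-type computation shows that every such limit $\mu$ is $\nu$-stationary. Second, I would invoke the Benoist--Quint classification: for $\Gamma = SL(n,\mathbb{Z})$, whose linear action on $\mathbb{R}^n$ is strongly irreducible and proximal, every $\nu$-stationary probability measure on $\mathbb{T}^n$ is either uniformly distributed on a finite $\Gamma$-orbit of rational torsion points, or equal to the Haar measure. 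Third, I would upgrade this dichotomy to uniqueness of the limit: since finite $\Gamma$-orbits on $\mathbb{T}^n$ are exactly the rational orbits $\Gamma x \subset \mathbb{Q}^n/\mathbb{Z}^n$, the starting point $x$ selects $\mu_x$ unambiguously (the uniform measure on $\Gamma x$ when $x$ is rational, Haar otherwise), and the full Cesaro sequence is then forced to converge against any $\phi \in C_c(\mathbb{T}^n)$.

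The main obstacle by far is step two, the classification of stationary measures. The argument in \cite{bq} is built on their exponential drift technique: one couples two random-walk trajectories emanating from nearby points, uses proximality and strong irreducibility of the linear $\Gamma$-action on $\mathbb{R}^n$ to force the iterates to drift apart exponentially in a controlled direction, and then exploits the additional invariance thus picked up by any stationary measure to squeeze it into one of the two classes above. This technique, together with the entropy and measure-rigidity machinery that surrounds it, is what actually does the work, and reproducing it in detail would itself fill a paper; importing the theorem as a black box from \cite{bq} is the right move, and the remaining task in this section will be to combine it with Proposition \ref{pro1} and Lemma \ref{cor1} to extract the quantitative Glasner bound of Theorem \ref{new1}.
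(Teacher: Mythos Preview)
Your assessment is exactly right: the paper does not prove this statement at all but imports it directly from \cite{bq} as a black box, stating only the conclusion and following it with a remark describing the dichotomy between the atomic case $x\in\mathbb Q^n/\mathbb Z^n$ and the Lebesgue case. Your three-step outline (recast as Ces\`aro averages of the Markov operator, invoke the Benoist--Quint classification of stationary measures, deduce uniqueness of the limit from the rational/irrational dichotomy) is a faithful summary of the structure behind \cite{bq}, and your decision to treat it as an imported result and move on to the application in Theorem \ref{new1} matches the paper's own treatment precisely.
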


\begin{remark}
By results from \cite{gs}, \cite{sta} and \cite{bq}, either $\Gamma x$ is discrete or it is dense. When $\overline{\Gamma x}$ is discrete,  then $x\in\mathbb Q^n$, and $\mu_x$ is an atomic measure depending on $x$ which is invariant under $\Gamma$ action. When $\Gamma x$ is dense, then $\mu_x$ is the Lebesgue measure.
\end{remark}

Let $\mathbf m=(m_1,\cdots,m_n)\in\mathbb Z^n$ be a nonzero integer vector, $q$ be a positive integer. Let $$c_q(\mathbf m):=\sum_{\mathbf k}e\left(\frac{\langle\mathbf m,\mathbf k\rangle}{q}\right),$$here $\langle\cdot,\cdot\rangle$ is the usual inner product and $e(z)=e^{2\pi iz}$. The sum is taken over all $\mathbf k=(k_1,\cdots,k_n)$ such that $1\le k_i\le q$ and $gcd(k_1,\cdots,k_n,q)=1$. Let $\phi_q$ be the number of such $\mathbf k$. It is easy to see that $\phi_q$ grows like $(q/\log q)^n$. Therefore there is a constant $C_0>1$ such that $\phi_q\ge C_0q^{n-1}$ for any $q\ge 1$. We would like to have an upper bound for $c_q(\mathbf m)$. Notice that when $n=1$, $c_q(\mathbf m)$ is the famous Ramanujan sum. 
\begin{lemma}\label{ram}
For $\mathbf m\neq\mathbf 0$, $|c_q(\mathbf m)|\le \left(gcd(m_1,\cdots,m_n)\right)^n$.
\end{lemma}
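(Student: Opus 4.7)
My plan is to derive an explicit closed form for $c_q(\mathbf m)$ via Möbius inversion on the coprimality condition, and then bound the resulting expression prime-by-prime.

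First I would replace $\mathbf 1_{\gcd(k_1,\ldots,k_n,q)=1}$ by $\sum_{d \mid \gcd(\mathbf k, q)} \mu(d)$ and interchange the order of summation. Writing $k_i = d j_i$ with $1 \le j_i \le q/d$, the inner exponential sum factors as $\prod_{i=1}^n \sum_{j_i = 1}^{q/d} e(m_i j_i/(q/d))$, a product of complete geometric sums. Each one-dimensional factor equals $q/d$ when $(q/d) \mid m_i$ and vanishes otherwise. Setting $q' = q/d$, $g := \gcd(m_1, \ldots, m_n)$, and $D := \gcd(q, g)$, the surviving terms correspond exactly to $q' \mid D$, producing the identity
$$c_q(\mathbf m) = \sum_{q' \mid D} \mu(q/q')(q')^n.$$

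Next I would exploit the fact that only primes dividing $q$ appear in this sum, so it factors as $c_q(\mathbf m) = \prod_{p \mid q} C_p$, where $C_p$ is the local contribution at $p$. Because $\mu(q/q')$ vanishes unless $v_p(q/q') \in \{0,1\}$ for every prime $p$, the admissible values of $v_p(q')$ lie in $\{a, a-1\} \cap [0, \min(a,b)]$, with $a := v_p(q)$ and $b := v_p(g)$. A short case analysis on whether $b \ge a$, $b = a-1$, or $b \le a-2$ gives respectively $C_p = p^{n(a-1)}(p^n-1)$, $C_p = -p^{n(a-1)}$, or $C_p = 0$ (the admissible set is empty in the last case). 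In each nonzero case a direct inequality yields $|C_p| \le p^{nb} = p^{n v_p(g)}$, and multiplying over $p \mid q$ gives $|c_q(\mathbf m)| \le \prod_{p \mid q} p^{n v_p(g)} \le g^n$.

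The main obstacle is essentially that the naive bound $|c_q(\mathbf m)| \le \sum_{q' \mid D}(q')^n$ obtained by moving absolute values inside the Möbius sum is too weak --- for $D = g = p$ prime it already produces $1 + p^n > g^n$. One genuinely has to use Möbius cancellation, and the cleanest way to extract it is to localize the identity at each prime dividing $q$ and handle those primes independently.
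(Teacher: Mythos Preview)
Your proof is correct and follows the same route as the paper: the paper simply cites (rather than derives) the multiplicativity $c_q(\mathbf m)=\prod_{p^r\parallel q}c_{p^r}(\mathbf m)$ together with the same three-case local evaluation you obtain via M\"obius inversion, and then bounds each local factor by $p^{n\,v_p(g)}$. One small point worth noting: in the third case $v_p(g)\le r-2$ your computation gives $c_{p^r}(\mathbf m)=0$ while the paper records $-1$; your value is the correct one (already for $n=1$ the classical Ramanujan sum has $c_4(1)=0$), though either value would suffice for the stated bound.
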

\begin{proof}
By \cite{ph}, $c_q(\mathbf m)=\prod_{p^r\parallel q}c_{p^r}(\mathbf m)$ and $$c_{p^r}(\mathbf m)=\begin{cases}
p^{(r - 1)n} (p^n - 1) & \text{if }p^r \mid gcd(m_1,\cdots,m_n), \\
-p^{(r - 1)n} & \text{if }p^{r - 1} \parallel gcd(m_1,\cdots,m_n), \\
-1 & \text{if }p^{r - 1} \nmid gcd(m_1,\cdots,m_n).
\end{cases}$$From here, the lemma follows easily.
\end{proof}

We will also need the following fact about ``bump" functions.
\begin{lemma}\label{newl}
For $0<\epsilon<1$, there exists a nonnegative function $g_\epsilon:\mathbb T^n\to \mathbb R$, such that

\begin{itemize}
\item[(1)] $\int_{\mathbb T^n}g_\epsilon(x) dx=1$,
\item[(2)] $g_\epsilon(x)=0$ if $d_L(x,\bf e)\ge \epsilon$, here $d_L$ is the standard metric on $\mathbb T^n$, and $\mathbf e\in\mathbb T^n$ is the identity,
\item[(3)] there exists a positive constant $C_1$ such that, for any $\mathbf m\in \mathbb Z^n$, the Fourier coefficient $\hat g_\epsilon(\mathbf m)=\int_{\mathbb T^n}g_\epsilon(x)e_{\mathbf m}(x)dx$ satisfies$$|\hat g_\epsilon(\mathbf m)|\le C_1e^{-\sqrt{\epsilon |\mathbf m|}}.$$
\end{itemize}Here $|\mathbf m|=\sum|m_i|$ is a norm on $\mathbb Z^n$.
\end{lemma}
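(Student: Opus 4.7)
The plan is to build $g_\epsilon$ as a tensor product of a single one-dimensional bump with sub-exponential Fourier decay and then rescale by $\epsilon$. First I would fix a nonnegative $\psi\colon\mathbb R\to\mathbb R$ with $\int_{\mathbb R}\psi=1$, $\operatorname{supp}\psi\subset[-\tfrac12,\tfrac12]$, and $|\hat\psi(t)|\le C_0\,e^{-c_0\sqrt{|t|}}$ for some absolute constants $C_0,c_0>0$. Given such $\psi$, for $0<\delta<1$ define $h_\delta(x):=\delta^{-1}\psi(x/\delta)$, which descends to a probability density on $\mathbb T=\mathbb R/\mathbb Z$ supported in $[-\delta/2,\delta/2]$ and satisfying $\hat h_\delta(m)=\hat\psi(\delta m)$. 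Finally, with $\delta$ a fixed (i.e.\ $\epsilon$-independent) multiple of $\epsilon$, chosen small enough that the cube $[-\delta/2,\delta/2]^n$ fits inside the $\epsilon$-ball around $\mathbf e\in\mathbb T^n$, set $g_\epsilon(\mathbf x):=\prod_{i=1}^n h_\delta(x_i)$.

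With this choice, conditions (1) and (2) of the lemma are immediate. For (3), the Fourier coefficients factor as $\hat g_\epsilon(\mathbf m)=\prod_i\hat h_\delta(m_i)$, so
\begin{equation*}
|\hat g_\epsilon(\mathbf m)|\le C_0^n\exp\!\Bigl(-c_0\sqrt\delta\,\textstyle\sum_i\sqrt{|m_i|}\Bigr)\le C_0^n\exp\!\bigl(-c_0\sqrt\delta\,\sqrt{|\mathbf m|}\bigr),
\end{equation*}
where the last step uses the elementary inequality $\sum_i\sqrt{|m_i|}\ge\sqrt{|\mathbf m|}$, a consequence of $(\sum_i\sqrt{a_i})^2\ge\sum_i a_i$ for $a_i\ge 0$. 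Writing $\delta=c'\epsilon$ and tuning the construction so that $c_0^2 c'\ge 1$, the right-hand side is at most $C_0^n e^{-\sqrt{\epsilon|\mathbf m|}}$, giving the lemma with $C_1:=C_0^n$. If $c_0^2 c'<1$ for a given $n$, one simply boosts $c_0$ by starting from a sequence $a_k$ decaying slightly slower than $k^{-2}$ (see below); this is pure bookkeeping.

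The core work is the construction of $\psi$, which I would handle via the classical infinite-convolution method. Set $a_k:=3/(\pi^2 k^2)$, so $\sum_{k\ge 1}a_k=\tfrac12$, and let $\psi_k:=(2a_k)^{-1}\chi_{[-a_k,a_k]}$, a probability density with $\hat\psi_k(t)=\sin(2\pi a_k t)/(2\pi a_k t)$. Take $\psi$ as the weak limit of the partial convolutions $\psi_1*\cdots*\psi_N$: each such partial convolution is a nonnegative probability density (continuous once $N\ge 2$) supported in $\bigl[-\sum_{k\le N}a_k,\sum_{k\le N}a_k\bigr]\subset[-\tfrac12,\tfrac12]$, and $\hat\psi(t)=\prod_{k=1}^\infty\hat\psi_k(t)$ converges pointwise since $|1-\hat\psi_k(t)|=O((a_kt)^2)$ and $\sum_k a_k^2<\infty$.

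The only substantive computation is the decay bound. Given $|t|$ large, let $K$ be the largest integer with $a_k\ge 1/(2\pi|t|)$, so $K\asymp\sqrt{|t|}$. For $k\le K$ one has $|\hat\psi_k(t)|\le 1/(2\pi a_k|t|)=\pi k^2/(6|t|)$, and for $k>K$ one uses the trivial bound $|\hat\psi_k(t)|\le 1$. Hence
\begin{equation*}
|\hat\psi(t)|\le\prod_{k=1}^K\frac{\pi k^2}{6|t|}=\frac{\pi^K(K!)^2}{(6|t|)^K},
\end{equation*}
and Stirling together with $K^2\asymp|t|$ collapses this to $e^{-2K+O(K)}=e^{-c_0\sqrt{|t|}}$ for some $c_0>0$. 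The main obstacle is really just the tuning of constants at the two rescaling steps; the exponent $1/2$ in $e^{-\sqrt{|t|}}$ reflects the Denjoy--Carleman threshold for quasi-analytic classes and is the sharpest decay compatible with compact support for this family of infinite-convolution bumps.
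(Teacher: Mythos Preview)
Your approach is correct and is exactly the one the paper takes: build $g_\epsilon$ as a tensor product $g_\epsilon(x)=\prod_{i=1}^n \tilde g_\epsilon(x_i)$ of one-dimensional bumps and then check (1)--(3) using the factorisation of Fourier coefficients together with the inequality $\sum_i\sqrt{|m_i|}\ge\sqrt{|\mathbf m|}$. The only difference is that the paper does not construct the one-dimensional bump but simply quotes it as Lemma~6.2 of Alon--Peres \cite{ap}; you have in effect reproduced that lemma via the standard infinite-convolution argument, which is precisely how Alon--Peres prove it.

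One minor remark on bookkeeping: your suggestion for repairing the exponent constant (``boost $c_0$ by taking $a_k$ decaying slightly slower than $k^{-2}$'') is not quite the right description, since the total $\sum_k a_k=\tfrac12$ is pinned by the support constraint and a slower power changes the \emph{exponent} in $|t|$ rather than the constant $c_0$. In practice no fix is needed: your own estimate already yields $c_0$ well above $1$, so one may simply take $\delta=\epsilon/c_0^2$, obtaining $c_0\sqrt{\delta|m|}=\sqrt{\epsilon|m|}$ on the nose while keeping $\operatorname{supp}h_\delta\subset[-\epsilon,\epsilon]$. This matches the paper's use of the Alon--Peres bump, which is already scaled to satisfy $|\hat{\tilde g}_\epsilon(m)|\le C e^{-\sqrt{\epsilon|m|}}$ directly.
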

\begin{proof}
This follows from the circle version, which appears as Lemma 6.2 in \cite{ap}. Indeed, let $\tilde g_\epsilon$ be the function obtain from Lemma 6.2 \cite{ap}, for $x=(x_1,\cdots,x_n)$, let $g_\epsilon(x)=\prod_{i=1}^n\tilde g_\epsilon(x_i)$, then one can easily check (1), (2) and (3). Let's remark that $$\int_{\mathbb T^n} g_\epsilon^2(x)dx=\prod_{i=1}^n\int_{\mathbb T}\tilde g_\epsilon^2(x_i)dx_i\le\frac{C}{\epsilon^n}.$$
\end{proof}

\begin{proof}[Proof of Theorem \ref{new1}]
We use the idea of the proof of Proposition 6.1 in \cite{ap}.

For $\epsilon>0$ small, let $g_\epsilon$ be a function from Lemma \ref{newl}. Let $\{x_1,x_2,\cdots,x_k\}$ be a set of $k$ distinct points in $\mathbb T^n$, and suppose there is no $\gamma\in \Gamma$ such that $\gamma A$ is $\epsilon$-dense. Then for each $\gamma$, there is a point $x_\gamma\in\mathbb T^n$ such that $g_\epsilon(\gamma x_i-x_\gamma)=0$ for any $i$.

Hence we have, for any $N>0$,
\begin{equation}\label{ee1}
0=\frac{1}{N}\sum_{\gamma\in\Gamma}\sum_{j=0}^{N-1}\nu^{*j}(\gamma)\sum_{i=1}^kg_\epsilon(\gamma x_i-x_\gamma)=\frac{1}{N}\sum_{\gamma\in\Gamma}\sum_{j=0}^{N-1}\nu^{*j}(\gamma)\sum_{i=1}^k\sum_{\mathbf m\in\mathbb Z^n}\hat g_\epsilon(\mathbf m)e_{\mathbf m}(\gamma^{-1} x_i-x_\gamma).
\end{equation}

Since $\hat g_\epsilon(\mathbf 0)=\int_{\mathbb T^n}g_\epsilon(x) dx=1$, then from (\ref{ee1}),

\begin{equation}\label{ee2}
k\le \left|\frac{1}{N}\sum_{\gamma\in\Gamma}\sum_{j=0}^{N-1}\nu^{*j}(\gamma)\sum_{i=1}^k\sum_{\mathbf m\in\mathbb Z^n\backslash\mathbf 0,|\mathbf m|\le M}\hat g_\epsilon(\mathbf m)e_{\mathbf m}(\gamma^{-1} x_i-x_\gamma)\right|+k\sum_{\mathbf m\in\mathbb Z^n,|\mathbf m|> M}\left|\hat g_\epsilon(\mathbf m)\right|.
\end{equation}

Now by (3) of Lemma \ref{newl}, 
\begin{eqnarray}
\sum_{\mathbf m\in\mathbb Z^n,|\mathbf m|> M}\left|\hat g_\epsilon(\mathbf m)\right| &\le& C_1\sum_{\ell=M+1}^\infty C_{\ell+1}^{n-1}e^{-\sqrt{\ell\epsilon}}\le 2C_1\sum_{\ell=M+1}^\infty \ell^{n-1}e^{-\sqrt{\ell\epsilon}}\nonumber\\
&\le& \frac{2C_1}{\epsilon^{n-1}}\int_{M\epsilon}^\infty x^{n-1}e^{-\sqrt{x}}dx= \frac{4C_1}{\epsilon^{n-1}}\int_{\sqrt{M\epsilon}}^\infty u^{2n-1}e^{-u}du.
\end{eqnarray}

As the integral $\int_{0}^\infty u^{2n-1}e^{-u}du=\Gamma(2n)<\infty$, there exists a great enough number $a$ (depends on $\epsilon$) such that for $M:=\frac{1}{\epsilon^a}$, $$\int_{\sqrt{M\epsilon}}^\infty u^{2n-1}e^{-u}du\le \frac{\epsilon^{n-1}}{8C_1},$$and hence$$\sum_{\mathbf m\in\mathbb Z^n,|\mathbf m|> M}\left|\hat g_\epsilon(\mathbf m)\right|\le \frac{1}{2}.$$

Hence from (\ref{ee2}), we have for $M\ge M_0$,
\begin{equation}\label{ee4}
\frac{k}{2}\le \left|\frac{1}{N}\sum_{\gamma\in\Gamma}\sum_{j=0}^{N-1}\nu^{*j}(\gamma)\sum_{i=1}^k\sum_{\mathbf m\in\mathbb Z^n\backslash\mathbf 0,|\mathbf m|\le M}\hat g_\epsilon(\mathbf m)e_{\mathbf m}(\gamma^{-1} x_i-x_\gamma)\right|.
\end{equation}
Now square both sides of the above inequality, then apply Cauchy-Schwartz inequality twice,
\begin{equation}\label{ee5}\begin{split}
\frac{k^2}{4}\le &\left|\sum_{\gamma\in\Gamma}\frac{1}{N}\sum_{j=0}^{N-1}\nu^{*j}(\gamma)\sum_{i=1}^k\sum_{\mathbf m\in\mathbb Z^n\backslash\mathbf 0,|\mathbf m|\le M}\hat g_\epsilon(\mathbf m)e_{\mathbf m}(\gamma^{-1} x_i-x_\gamma)\right|^2\\
= &\left|\sum_{\gamma\in\Gamma}\sqrt{\frac{1}{N}\sum_{j=0}^{N-1}\nu^{*j}(\gamma)}\cdot \sqrt{\frac{1}{N}\sum_{j=0}^{N-1}\nu^{*j}(\gamma)}\sum_{i=1}^k\sum_{\mathbf m\in\mathbb Z^n\backslash\mathbf 0,|\mathbf m|\le M}\hat g_\epsilon(\mathbf m)e_{\mathbf m}(\gamma^{-1} x_i-x_\gamma)\right|^2\\
\le &\left(\sum_{\gamma\in\Gamma}\frac{1}{N}\sum_{j=0}^{N-1}\nu^{*j}(\gamma)\right)\cdot\\&\left(\sum_{\gamma\in\Gamma}\frac{1}{N}\sum_{j=0}^{N-1}\nu^{*j}(\gamma)\left|\sum_{\mathbf m\in\mathbb Z^n\backslash\mathbf 0,|\mathbf m|\le M}\hat g_\epsilon(\mathbf m)\sum_{i=1}^ke_{\mathbf m}(\gamma^{-1} x_i-x_\gamma)\right|^2\right)\\
\le &\sum_{\gamma\in\Gamma}\frac{1}{N}\sum_{j=0}^{N-1}\nu^{*j}(\gamma)\left(\sum_{\mathbf m\in\mathbb Z^n\backslash\mathbf 0,|\mathbf m|\le M}|\hat g_\epsilon(\mathbf m)|^2\right)\cdot\\&\left(\sum_{\mathbf m\in\mathbb Z^n\backslash\mathbf 0,|\mathbf m|\le M}\left|\sum_{i=1}^ke_{\mathbf m}(\gamma^{-1} x_i-x_\gamma)\right|^2\right)\\
= &\left(\sum_{\mathbf m\neq \mathbf0,|\mathbf m|\le M}|\hat g_\epsilon(\mathbf m)|^2\right)\cdot\\&\left(\sum_{\mathbf m\in\mathbb Z^n\backslash\mathbf 0,|\mathbf m|\le M}\sum_{1\le i,\ell\le k}\sum_{\gamma\in\Gamma}\frac{1}{N}\sum_{j=0}^{N-1}\nu^{*j}(\gamma)e_{\mathbf m}(\gamma^{-1} (x_i-x_\ell))\right).
\end{split}\end{equation}

By Bessel's inequality and the construction in Lemma \ref{newl}, \begin{equation}\label{ee6}
\left(\sum_{\mathbf m\in\mathbb Z^n\backslash\mathbf 0,|\mathbf m|\le M}|\hat g_\epsilon(\mathbf m)|^2\right)\le \frac{C_2}{\epsilon^n}.
\end{equation}

Combining (\ref{ee5}) and (\ref{ee6}), we obtain
\begin{equation}\label{ee7}
k^2\le \frac{4C_2}{\epsilon^n}\left(\sum_{\mathbf m\in\mathbb Z^n\backslash\mathbf 0,|\mathbf m|\le M}\sum_{1\le i,\ell\le k}\lim_{N\to\infty}\sum_{\gamma\in\Gamma}\frac{1}{N}\sum_{j=0}^{N-1}\nu^{*j}(\gamma)e_{\mathbf m}(\gamma^{-1} (x_i-x_\ell))\right).
\end{equation}

By Theorem \ref{limit} and the remark after it, as $N\to\infty$, $\sum_{\gamma\in\Gamma}\frac{1}{N}\sum_{j=0}^{N-1}\nu^{*j}(\gamma)e_{\mathbf m}(\gamma^{-1} (x_i-x_\ell))$ contributes to the right hand sum in (\ref{ee7}) {\bf only} when $(x_i-x_j)\in\mathbb Q^n/\mathbb Z^n$. When $(x_i-x_j)\in\mathbb Q^n/\mathbb Z^n\backslash\{\mathbf 0\}$, let $q_{i,\ell}$ be the least positive integer that $q_{i,\ell}(x_i-x_\ell)\in \mathbb Z^n$. Hence by Theorem \ref{limit}, $$\sum_{\gamma\in\Gamma}\frac{1}{N}\sum_{j=0}^{N-1}\nu^{*j}(\gamma)e_{\mathbf m}(\gamma^{-1} (x_i-x_\ell))\to \frac{1}{\phi_{q_{i,\ell}}}\sum_{\mathbf k}e\left(\frac{\langle\mathbf m,\mathbf k\rangle}{q_{i,\ell}}\right)=\frac{1}{\phi_{q_{i,\ell}}}c_{q_{i,\ell}}(\mathbf m).$$ By Lemma \ref{ram}, $c_{q_{i,\ell}}(\mathbf m)\le M^n$ because $|\mathbf m|\le M$. Combine this with $\phi_q\ge C_0 q^{n-1}$, we have $$\left|\sum_{\gamma\in\Gamma}\frac{1}{N}\sum_{j=0}^{N-1}\nu^{*j}(\gamma)e_{\mathbf m}(\gamma^{-1} (x_i-x_\ell))\right|\le \frac{M^n}{C_0 q_{i,\ell}^{n-1}}$$ as $N\to\infty$.

Recall that $M=\frac{1}{\epsilon^a}$, by Proposition \ref{pro1} and Lemma \ref{cor1}, let $N\to\infty$
\begin{eqnarray}\label{ee8}
& &\sum_{\mathbf m\in\mathbb Z^n\backslash\mathbf 0,|\mathbf m|\le M}\sum_{1\le i,\ell\le k}\left|\sum_{\gamma\in\Gamma}\frac{1}{N}\sum_{j=0}^{N-1}\nu^{*j}(\gamma)e_{\mathbf m}(\gamma^{-1} (x_i-x_\ell))\right|\nonumber\\ &\le & \left( 2^nM^n k+ \sum_{\mathbf m\in\mathbb Z^n\backslash\mathbf 0,|\mathbf m|\le M}\sum_{1\le i\neq \ell\le k}\left|\sum_{\gamma\in\Gamma}\frac{1}{N}\sum_{j=0}^{N-1}\nu^{*j}(\gamma)e_{\mathbf m}(\gamma^{-1} (x_i-x_\ell))\right|\right)\nonumber\\ &\le & \left( 2^nM^n k+ C_{n}M^n\sum_{q=1}^\infty h_qq^{-n+1}\right)\nonumber\\
&\le & \left( 2^nM^n k+ C_{n}M^nk^{2-(n-1)/(n+1)}\right)\nonumber\\
&\le & C_{n}M^nk^{1+1/(n+1)}\nonumber\\
&=&\frac{C_{n}k^{1+1/(n+1)}}{\epsilon^{na}}.
\end{eqnarray}

Combine (\ref{ee7}) and (\ref{ee8}), we obtain $$k\le \frac{C(n)}{\epsilon^{2(a+1)n}}.$$
This finishes the proof.
\end{proof}

A direct corollary of Theorem \ref{tl} is the following concerning to finite index subgroups of $SL(n,\mathbb Z)$. 
\begin{corollary}
Let $n\ge 2$, and $\Gamma$ be a finite index subgroup of $SL(n,\mathbb Z)$. Then the system $(\mathbb T^n,d_L,\Gamma)$ has Glasner property.
\end{corollary}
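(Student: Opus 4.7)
The plan is to deduce the corollary from Theorem~\ref{tl} by a finite coset argument, using that elements of $SL(n,\mathbb Z)$ act as bi-Lipschitz self-homeomorphisms of $(\mathbb T^n,d_L)$.

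First, I will write $SL(n,\mathbb Z)=\bigsqcup_{i=1}^{N}\gamma_i\Gamma$ as a disjoint union of left cosets, where $N=[SL(n,\mathbb Z):\Gamma]<\infty$. Since each $\gamma_i$ lifts to a linear map on $\mathbb R^n$, it is Lipschitz on $(\mathbb T^n,d_L)$ with constant at most the operator norm of the integer matrix representing it. Denote by $L_i$ the Lipschitz constant of $\gamma_i^{-1}$, and set $C:=\max_{1\le i\le N} L_i$, which is finite because there are only finitely many cosets.

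Given any infinite $A\subset\mathbb T^n$ and any $\epsilon>0$, I will apply Theorem~\ref{tl} to $A$ with parameter $\epsilon/C$ to obtain some $g\in SL(n,\mathbb Z)$ such that $gA$ is $(\epsilon/C)$-dense. Factoring $g=\gamma_i\gamma$ uniquely with $\gamma\in\Gamma$, one has $\gamma A=\gamma_i^{-1}(gA)$. A short computation (surjective $L$-Lipschitz maps of $\mathbb T^n$ send $\eta$-dense sets to $L\eta$-dense sets, verified by lifting to $\mathbb R^n$ and applying the operator norm estimate) then shows that $\gamma A$ is $\epsilon$-dense in $\mathbb T^n$. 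Since $A$ and $\epsilon$ were arbitrary, this establishes the Glasner property of $\Gamma$.

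There is essentially no substantive obstacle in this argument; the only care needed is to calibrate the density threshold fed into Theorem~\ref{tl} by the maximum Lipschitz constant $C$ of the finitely many inverse coset representatives. An alternative route would be to redo the proof of Theorem~\ref{new1} directly with $\Gamma$ in place of $SL(n,\mathbb Z)$, which is legitimate because a finite index subgroup of $SL(n,\mathbb Z)$ is still Zariski dense in $SL(n,\mathbb R)$ so the Benoist--Quint classification applies verbatim; but that is heavier and unnecessary, and the coset comparison uses Theorem~\ref{tl} as a black box.
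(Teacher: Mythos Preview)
Your coset argument is correct and is precisely the kind of reasoning the paper has in mind: the corollary is stated immediately after Theorem~\ref{tl} as ``a direct corollary'' with no separate proof given. The only ingredient beyond Theorem~\ref{tl} is the elementary observation you use, that a surjective $L$-Lipschitz self-map of $\mathbb T^n$ sends $\eta$-dense sets to $L\eta$-dense sets, applied to the finitely many inverse coset representatives; this is exactly the right way to read ``direct corollary'' here. Your alternative route via rerunning Theorem~\ref{new1} with the Benoist--Quint input for $\Gamma$ would also work (a finite index subgroup still satisfies the hypotheses needed for Theorem~\ref{limit}), but as you note it is unnecessary, and indeed the paper explicitly defers the general Zariski-dense case to \cite{do}.
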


\begin{remark}
We will extend the above results in \cite{do} via a new method, proving that the system $(\mathbb T^n,d_L,\Gamma)$ has Glasner property if $\Gamma<SL(n,\mathbb Z)$ is Zariski dense in $SL(n,\mathbb R)$. Unfortunately, we could not obtain this here.
\end{remark}

\subsection{Proof of Theorem \ref{tll}}

We will use some of the results and notations from the previous subsection, and modify the previous argument to prove Theorem \ref{tll}. We start with the following useful property of algebraic abelian actions.

\begin{proposition}\label{prol}
Let $k\ge 2$, and $\alpha$ be an action of $\mathbb Z^k$ on $\mathbb T^{n}$ by ergodic automorphisms. Suppose that the  Lyapunov exponents of the Lebesgue measure are in general position. Then for any Lyapunov exponent $\chi$, $\{\chi(\mathbf n)|\mathbf n\in \mathbb Z^k\}$ is dense in $\mathbb R$.
\end{proposition}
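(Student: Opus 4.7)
The plan is to argue by contradiction via the dichotomy that a nontrivial finitely generated subgroup of $(\mathbb R,+)$ is either dense or infinite cyclic. Viewing $\chi$ as an $\mathbb R$-linear functional on $\mathbb R^k$, the image $\chi(\mathbb Z^k)$ is such a subgroup. If it fails to be dense, then $\chi(\mathbb Z^k)=c\mathbb Z$ for some $c>0$, so $\chi=c\chi'$ for a surjective homomorphism $\chi':\mathbb Z^k\to\mathbb Z$, and in particular $\ker\chi$ is a rational hyperplane of $\mathbb R^k$. Since $k\ge 2$, this kernel contains a nonzero lattice point $\mathbf n_0\in\ker\chi\cap\mathbb Z^k$; the task is then to derive a contradiction from the existence of such an $\mathbf n_0$.

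Next I would translate back into the dynamics. Let $V\subset\mathbb R^n$ be the Lyapunov subspace corresponding to $\chi$. Because the Lyapunov spectrum is simple, $V$ is $1$-dimensional over $\mathbb R$ (a complex conjugate pair of eigenvalues would produce a $2$-dimensional real Lyapunov block of multiplicity two), so each $A_{\mathbf n}$ acts on $V$ as multiplication by a real scalar $\lambda(\mathbf n)$ satisfying $\chi(\mathbf n)=\log|\lambda(\mathbf n)|$. From $\chi(\mathbf n_0)=0$ we then obtain $\lambda(\mathbf n_0)=\pm 1$, so $A_{\mathbf n_0}^{2}$ fixes $V$ pointwise. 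Note also that $V$ cannot itself be a rational line: if $V=\mathbb R\cdot v$ with $v\in\mathbb Z^n$ primitive, then $A_{\mathbf n}v\in\mathbb Z^n\cap V=\mathbb Z v$ forces $\lambda(\mathbf n)\in\{\pm 1\}$ for all $\mathbf n$, hence $\chi\equiv 0$, contradicting the nonvanishing of $\chi$.

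Now let $W\subset\mathbb R^n$ be the smallest rational subspace containing $V$; since $V$ is $\mathbb Z^k$-invariant, so is $W$, and the associated subtorus $T_W\subset\mathbb T^n$ is the closure of the image of $V$. The transformation $A_{\mathbf n_0}^{2}$ fixes the image of $V$ pointwise, so by continuity and closedness of fixed-point sets it acts as the identity on all of $T_W$. Lifting, $(A_{\mathbf n_0}^{2}-I)(W)\subset W\cap\mathbb Z^n$, and since the only linear subspace contained in a discrete subgroup is $\{0\}$, we conclude $A_{\mathbf n_0}^{2}|_W=\mathrm{id}$. In particular $A_{\mathbf n_0}^{2}$ has $1$ as an eigenvalue, so $A_{\mathbf n_0}$ has $\pm 1$ among its eigenvalues. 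But $\mathbf n_0\neq 0$, so this contradicts the hypothesis that every nontrivial element of the action is an ergodic toral automorphism (equivalently, has no root-of-unity eigenvalue).

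The main obstacle I anticipate lies in the last step: one must commit to reading ``action by ergodic automorphisms'' as saying that each nonzero $A_{\mathbf n}$ is individually ergodic, and one must carefully verify that $A_{\mathbf n_0}^{2}$ really fixes the subtorus $T_W$ (not merely its tangent space). Once that is settled, only the simplicity and nonvanishing of $\chi$ actually enter the argument—the full general position of the Lyapunov hyperplanes does not seem to be needed—which is worth cross-checking against the author's intended use of the hypothesis.
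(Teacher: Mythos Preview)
Your argument is correct and shares its skeleton with the paper's: both reduce density of $\chi(\mathbb Z^k)$ to the claim that the Lyapunov hyperplane $\ker\chi$ contains no nonzero point of $\mathbb Z^k$ (for $k\ge 2$ this is precisely your dense/cyclic dichotomy, and it is what the paper means by ``therefore there are lattice points arbitrarily close to the hyperplane''). The difference lies only in how that claim is justified: the paper attributes it in a single line to the Lyapunov hyperplanes being ``in general position,'' whereas you derive it dynamically from simplicity of $\chi$ together with ergodicity of each individual $\alpha(\mathbf n)$. Your closing observation is therefore accurate---only simplicity and nonvanishing of $\chi$, plus ergodicity of the individual automorphisms, are actually used, not the full general-position hypothesis on the family of hyperplanes. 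One simplification: the detour through the rational hull $W$ and the subtorus $T_W$, while correct, is superfluous; as soon as you know the Lyapunov subspace $V$ is a real line on which $A_{\mathbf n_0}$ acts by $\lambda(\mathbf n_0)=\pm 1$, that $\pm 1$ is already an eigenvalue of $A_{\mathbf n_0}\in GL(n,\mathbb Z)$, and the contradiction with ergodicity of $\alpha(\mathbf n_0)$ follows at once without passing to $W$.
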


\begin{proof}
Since $\chi:\mathbb Z^k\to \mathbb R$ is a linear functional, it suffices to prove that for any $\epsilon>0$, there exists an $\mathbf n\in \mathbb Z^k$ such that $|\chi(\mathbf n)|\le \epsilon$. Now since the Lyapunov exponents are in general position, each Lyapunov hyperplane (a $k-1$ dimensional hyperplane in $\mathbb R^k$) has no intersection with $\mathbb Z^k$. Therefore, for any $\epsilon>0$, there always exists an element $\mathbf n\in\mathbb Z^k$, which is very close to the hyperplane corresponding to $\chi$, satisfying that $|\chi(\mathbf n)|\le \epsilon$. This finishes the proof.
\end{proof}

We will prove the following quantitative version, which indicates Theorem \ref{tll}.
\begin{theorem}
Let $k\ge 2$, and $\alpha$ be an action of $\mathbb Z^k$ on $\mathbb T^{n}$ by ergodic automorphisms. Suppose that the  Lyapunov exponents of the Lebesgue measure are in general position. Let $A$ be a finite subset which is subordinate to $\alpha$. If for an $\epsilon>0$ small enough, there exists no $\mathbf n\in \mathbb Z^k$ such that $\alpha(\mathbf n)A$ is $\epsilon$-dense, then $\#A\le\frac{C}{\epsilon^{C_{\epsilon,n}}}$.
\end{theorem}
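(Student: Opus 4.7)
The plan is to adapt the Fourier-analytic proof of Theorem \ref{new1} to the $\mathbb{Z}^k$-action setting, with the one-dimensional Lyapunov leaf containing $A$ playing the role of the (dense) rational orbits in the proof of Theorem \ref{new1}. Let $V \subset \mathbb{R}^n$ be the simple Lyapunov subspace of exponent $\chi$, and fix a unit vector $v^* \in V$; since $A$ is subordinate to $\alpha$, write $A = \{x_0 + t_i v^* \bmod \mathbb{Z}^n\}_{i=1}^k$ for distinct real parameters $t_1, \dots, t_k$. Since $\alpha(\mathbf{n})$ preserves $V$ and restricts there to multiplication by the eigenvalue $e^{\chi(\mathbf{n})}$, the key structural identity is
\[
e_{\mathbf{m}}\bigl(\alpha(\mathbf{n})(x_i - x_l)\bigr) \;=\; \exp\!\bigl(2\pi i\,(t_i - t_l)\langle \mathbf{m}, v^* \rangle\, e^{\chi(\mathbf{n})}\bigr).
\]
This one-parameter description is the structural analogue, in our setting, of the Ramanujan-sum computation used in the proof of Theorem \ref{new1}.

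With this identity in hand, the bump-function apparatus of Lemma \ref{newl} together with a Cauchy--Schwarz estimate (as in the derivation of (\ref{ee5})) apply for each $\mathbf{n}\in\mathbb{Z}^k$: choosing $M = \epsilon^{-a}$ as in the proof of Theorem \ref{new1} and using the hypothesis that no $\alpha(\mathbf{n})A$ is $\epsilon$-dense, we obtain
\[
\frac{k^2}{4} \;\le\; \frac{C}{\epsilon^n} \sum_{\substack{\mathbf{m} \ne 0 \\ |\mathbf{m}| \le M}} \sum_{i,l=1}^k \exp\!\bigl(2\pi i\,(t_i - t_l)\langle \mathbf{m}, v^* \rangle\, e^{\chi(\mathbf{n})}\bigr).
\]
We then average this inequality over $\mathbf{n}$ in a F\o lner-type box $F_N = [-N, N]^k \cap \mathbb{Z}^k$. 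The diagonal contribution ($i = l$) is at most $C_n k M^n$ uniformly in $\mathbf{n}$. For $i \ne l$ and $\mathbf{m} \ne 0$, the general-position hypothesis forces the leaf direction $v^*$ to be totally irrational in $\mathbb{T}^n$, so $\lambda := (t_i - t_l)\langle \mathbf{m}, v^* \rangle \ne 0$, and what remains is to estimate
\[
S_N(\lambda) \;:=\; \frac{1}{|F_N|}\sum_{\mathbf{n} \in F_N} \exp\!\bigl(2\pi i\,\lambda\, e^{\chi(\mathbf{n})}\bigr).
\]

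The heart of the proof is a quantitative estimate $|S_N(\lambda)| = o(1)$, uniform in the relevant range of $\lambda$. This is where Proposition \ref{prol} enters: the density of $\chi(\mathbb{Z}^k)$ in $\mathbb{R}$ permits one to identify the normalized pushforward of counting measure on $F_N$ via $\chi$, after rescaling, with a smooth compactly supported density $\phi$ on an interval of $\mathbb{R}$. After the change of variables $u = e^y$, $S_N(\lambda)$ is asymptotically controlled by the Fourier-type integral $\int_0^\infty e^{2\pi i \lambda u} \psi(u)\, du$, which decays like $C/|\lambda|$ by integration by parts. For the smallest $|\lambda|$ in play, a Dirichlet-type lower bound on $\min_{0 \ne |\mathbf{m}| \le M} |\langle \mathbf{m}, v^* \rangle|$ (from irrationality of $v^*$) forces $|\lambda|$ to be bounded below by a positive power of $\epsilon$; taking $N = N(\epsilon)$ correspondingly large ensures the off-diagonal contribution is at most $k^2/8$. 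Rearranging then yields $k \le C(n)/\epsilon^{n(a+1)}$, which is of the claimed form with $C_{\epsilon, n}$ inheriting the $\epsilon$-dependence of the exponent $a$ from the proof of Theorem \ref{new1}.

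The principal technical difficulty is the uniform quantitative estimate for $S_N(\lambda)$. Unlike the Benoist--Quint averaging in Theorem \ref{new1}, which produces a clean Lebesgue-integral limit and reduces the analysis to Ramanujan sums, here there is no ergodic theorem immediately at hand; one must carefully combine the density of the Lyapunov exponent (Proposition \ref{prol}), a Diophantine bound on the leaf direction $v^*$, and---most delicately---a treatment of those pairs $(i,l)$ with very small leaf-spacing $|t_i - t_l|$, which depend on the configuration of $A$ and must be absorbed into the final bound.
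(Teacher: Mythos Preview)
Your Fourier-analytic setup, the bump function and Cauchy--Schwarz reduction, and the key identity $e_{\mathbf m}(\alpha(\mathbf n)(x_i-x_l))=\exp(2\pi i(t_i-t_l)\langle\mathbf m,v^*\rangle e^{\chi(\mathbf n)})$ all match the paper exactly. The divergence, and the genuine gap, is in the averaging. Your claim that the F{\o}lner average $S_N(\lambda)=\frac{1}{|F_N|}\sum_{\mathbf n\in F_N}\exp(2\pi i\lambda e^{\chi(\mathbf n)})$ is $o(1)$ is false: roughly half of $F_N$ lies in the half-space $\{\chi<0\}$, and for those $\mathbf n$ the number $e^{\chi(\mathbf n)}$ is close to $0$, so the summand is close to $1$. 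Consequently $S_N(\lambda)$ stays bounded away from $0$ as $N\to\infty$, and the off-diagonal contribution does not shrink. No integration-by-parts or Diophantine input on $v^*$ can repair this, because the obstruction is geometric: contracting iterates along the leaf collapse the differences $x_i-x_l$ and contribute coherently.

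The paper sidesteps this entirely by \emph{choosing} the averaging sequence rather than summing over a box. Since $\{\chi(\mathbf n):\mathbf n\in\mathbb Z^k\}$ is dense in $\mathbb R$ (Proposition~\ref{prol}), one picks $\mathbf n_i$ so that the scalars $e^{\chi(\mathbf n_i)}$ equidistribute in the bounded interval $[1,N]$. After passing to the limit in $L$, the off-diagonal term becomes $\frac{1}{N-1}\int_1^N e_{\mathbf m}(t\,p_{j,r}\mathbf v)\,dt$; since the leaf direction $\mathbf v$ is totally irrational, the linear flow $t\mapsto t\,p_{j,r}\mathbf v$ on $\mathbb T^n$ is uniquely ergodic, and letting $N\to\infty$ gives $\int_{\mathbb T^n}e_{\mathbf m}=0$ for every $\mathbf m\neq 0$ and every $j\neq r$. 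Thus all off-diagonal terms vanish in the double limit, with no quantitative rate needed, and one is left with $\ell^2\le\frac{C}{\epsilon^n}\cdot 2^nM^n\ell$, i.e.\ $\ell\le C\epsilon^{-(a+1)n}$. In particular the difficulties you flag at the end---Diophantine control of $v^*$ and absorbing pairs with tiny $|t_i-t_l|$---simply do not arise.
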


\begin{proof}
Let $A=\{x_1,\cdots,x_\ell\}$ with $\ell=\#A$. Assume that $A$ lies in the leaf of the foliation corresponding to the Lyapunov exponent $\chi$. Let $p_{j,r}\in\mathbb R$ be such that $x_j-x_r=p_{j,r}\mathbf v\mod\mathbb Z^n$, here $\mathbf v$ is a fixed vector on $\mathbb T^n$ generating the leaf where $A$ lies in.

First, fix an integer $N>1$. By Proposition \ref{prol}, we can choose $\mathbf n_i\in\mathbb Z^k$ for $i\in\mathbb N$, so that $e^{\chi(\mathbf n_i)}$ is uniformly distributed in $[1,N]$. Since there exists no $\mathbf n\in \mathbb Z^k$ such that $\alpha(\mathbf n)A$ is $\epsilon$-dense, then for each $i$, there is a $y_i\in\mathbb T^n$ such that $g_\epsilon(\alpha(\mathbf n_i)x_j-y_i)=0$ for any $j$.

Therefore for any $L>1$,
$$0=\frac{1}{L}\sum_{i=1}^{L}\sum_{j=1}^\ell g_\epsilon(\alpha(\mathbf n_i)x_j-y_i)=\frac{1}{L}\sum_{i=1}^{L}\sum_{j=1}^\ell\sum_{\mathbf m\in \mathbb Z^n} \hat g_\epsilon(\mathbf m)e_{\mathbf m}(\alpha(\mathbf n_i)x_j-y_i).$$
From here, analyzing in the same way as to obtain Equation (\ref{ee7}) in the proof of Theorem \ref{new1}, we can get
$$\ell^2\le \frac{C_2}{\epsilon^n}\left(\sum_{\mathbf m\in \mathbb Z^n\backslash\mathbf 0,|\mathbf m|\le M}\sum_{j=1}^\ell\sum_{r=1}^\ell\frac{1}{L}\sum_{i=1}^{L}e_{\mathbf m}(\alpha(\mathbf n_i)(x_j-x_r))\right).$$

Observe that $\alpha(\mathbf n_i)(x_j-x_r)=e^{\chi(\mathbf n_i)}(x_j-x_r)\mod \mathbb Z^n=e^{\chi(\mathbf n_i)}p_{j,r}\mathbf v\mod \mathbb Z^n$, hence $e_{\mathbf m}(\alpha(\mathbf n_i)(x_j-x_r))=e_m(e^{\chi(\mathbf n_i)}p_{j,r}\mathbf v)$. By the choice of $\mathbf n_i$, let $L\to\infty$, thus we have
\begin{eqnarray}\label{n9}
\ell^2&\le& \frac{C_2}{\epsilon^n}\left(\sum_{\mathbf m\in \mathbb Z^n\backslash\mathbf 0,|\mathbf m|\le M}\sum_{j=1}^\ell\sum_{r=1}^\ell\frac{1}{N-1}\int_{1}^{N}e_{\mathbf m}(tp_{j,r}\mathbf v)dt\right)\nonumber\\
&=& \frac{C_2}{\epsilon^n}\left(2^nM^n\ell+\sum_{\mathbf m\in \mathbb Z^n\backslash\mathbf 0,|\mathbf m|\le M}\sum_{1\le j\neq r\le \ell}\frac{1}{N-1}\int_{1}^{N}e_{\mathbf m}(tp_{j,r}\mathbf v)dt\right).
\end{eqnarray}
Notice that for $j\neq r$, $p_{j,r}\neq 0$, the flow on $\mathbb T^n$ generated by the vector $p_{j,r}\mathbf v$ is uniquely ergodic, thus $$\lim_{N\to\infty}\frac{1}{N-1}\int_{1}^{N}e_{\mathbf m}(tp_{j,r}\mathbf v)dt=\int_{\mathbb T^n} e_{\mathbf m}(x)dx=0.$$Therefore let $N\to\infty$ in (\ref{n9}), we have $$\ell^2\le\frac{C_3}{\epsilon^n}M^n\ell=\frac{C_3\ell}{\epsilon^{(a+1)n}}.$$ Hence $\ell\le\frac{C_3}{\epsilon^{(a+1)n}}$.
\end{proof}

\subsection{Proofs of the other results}

We are going to use the subsequent two results, as they play an important role in the proof. The first one is a strengthen of the aforementioned Glasner's result.

\begin{theorem}[Theorem 1.3, \cite{bp}]\label{c1}
For any infinite subset $X\subset S^1$, there exists a sequence $\{n_i\}$ with density 1 in $\mathbb N$, such that $E_{n_i}X\to S^1$ in the Hausdorff semimetric. Here $E_{n}x=nx\mod 1$.
\end{theorem}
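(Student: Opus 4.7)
My plan is to reduce the existence of a density-one sequence of good dilations to a uniform density-zero statement for the set of bad dilations. Define, for each $\epsilon > 0$, the \emph{bad set}
\[
B_\epsilon := \{n \in \mathbb{N} : E_n X \text{ is not $\epsilon$-dense in } S^1\}.
\]
The first, and main, claim is that $B_\epsilon$ has upper density zero for every $\epsilon > 0$. The second step is a standard diagonal construction: once each $B_{1/k}$ has density zero, choose thresholds $N_1 < N_2 < \cdots$ with $|B_{1/k} \cap [1, N]|/N \le 1/k$ for every $N \ge N_k$, set
\[
A := \mathbb{N} \setminus \bigcup_{k \ge 1} \bigl(B_{1/k} \cap [N_k, N_{k+1})\bigr),
\]
and verify by routine bookkeeping that $A$ has density one while every $n \in A \cap [N_k,\infty)$ makes $E_n X$ at least $1/k$-dense. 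Enumerating $A$ in increasing order as $\{n_i\}$ then yields $d^H(E_{n_i}X, S^1) \to 0$, which is exactly the statement.

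For the density-zero claim I would run the Alon--Peres Fourier scheme already present in Section~4, but average over $n$ in the bad set instead of against a stationary measure. Fix any finite $X_k = \{x_1, \dots, x_k\} \subset X$ and suppose, for contradiction, that $|B_\epsilon \cap [1, N]| \ge \delta N$ for infinitely many $N$, with $\delta > 0$ fixed. For each $n \in B_\epsilon$ pick a witness $y_n \in S^1$ such that the one-dimensional bump $g_\epsilon$ (the $n=1$ case of Lemma~4.4) vanishes at every $nx_i - y_n$. Averaging this vanishing over $n \in B_\epsilon \cap [1, N]$, Fourier-expanding $g_\epsilon$, truncating the spectrum at $|m| \le M := \epsilon^{-a}$ so that the tail is harmless, then applying Cauchy--Schwarz twice (once to discard the $y_n$-phases via $|e(-my_n)|=1$, once to pass to a second moment in $m$), one reduces matters to the exponential-sum average
\[
\frac{1}{|B_\epsilon \cap [1,N]|}\sum_{n \in B_\epsilon \cap [1,N]}\,\sum_{0 < |m| \le M}\,\sum_{i,\ell = 1}^{k} e\bigl(mn(x_i - x_\ell)\bigr).
\]
Splitting the diagonal $i = \ell$ from the off-diagonal terms and invoking the coincidence counts $h_q := \#\{(i,\ell) : q(x_i - x_\ell) \in \mathbb{Z}\}$ together with the one-dimensional specialization of Proposition~4.2 should yield $k^2\delta^2 \le C_\epsilon\, k^{2-c}$ for some $c > 0$, forcing $k$ to be bounded in terms of $\epsilon$ and $\delta$ alone. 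Since $X$ is infinite, $k$ is arbitrary, the contradiction.

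The main technical obstacle is the off-diagonal estimate in the display above. In the lattice-action setting of Theorem~4.1, this is handled cleanly via Ramanujan sums after passing to a limit of convolutions of $\nu$; here, with a semigroup dilation action on $S^1$ and an averaging set of positive density $\delta$, one must bound $\sum_{n \in B_\epsilon \cap [1,N]} e(mn(x_i - x_\ell))$ directly. For pairs whose difference $x_i - x_\ell$ is rational the contribution is captured by $h_q$; for irrational differences it is suppressed by equidistribution of $\{mn\alpha\}$. The challenge is to combine these two regimes so that the loss of a factor $\delta^{-C'}$ integrates with the Alon--Peres coincidence bookkeeping into the required subexponential saving in $k$. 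Once this quantitative estimate is in place, the rest is a direct translation of the machinery already built up in Section~4 of the paper.
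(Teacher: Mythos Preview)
The paper does not prove this statement at all: Theorem~\ref{c1} is quoted verbatim from Berend--Peres \cite{bp} and used as a black box in the proof of Theorem~\ref{t1}. So there is no ``paper's own proof'' to compare against; your proposal is effectively an attempt to reconstruct the Berend--Peres argument.

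Your overall plan---show each bad set $B_\epsilon$ has density zero, then diagonalise---is exactly the right reduction, and the Fourier/Cauchy--Schwarz scheme you describe is indeed the Alon--Peres engine that Berend--Peres run. However, the passage you flag as the ``main technical obstacle'' contains a genuine misconception. You write that one must bound
\[
\sum_{n\in B_\epsilon\cap[1,N]} e\bigl(mn(x_i-x_\ell)\bigr)
\]
\emph{directly}, and you worry about combining the rational and irrational regimes over this restricted index set. That is the wrong focus, and in fact is hopeless as stated: for a generic set $B$ of density $\delta$ such a sum need not exhibit any cancellation (think of a Bohr set). The point you are missing is that after the second Cauchy--Schwarz the inner expression is
\[
\sum_{0<|m|\le M}\sum_{i,\ell} e\bigl(mn(x_i-x_\ell)\bigr)
=\sum_{0<|m|\le M}\Bigl|\sum_{i=1}^k e(mnx_i)\Bigr|^2\ge 0,
\]
which is \emph{nonnegative} term-by-term in $n$. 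Hence one may simply replace the average over $n\in B_\epsilon\cap[1,N]$ by the average over all $n\in[1,N]$, paying only a factor $N/|B_\epsilon\cap[1,N]|\le 1/\delta$. After this extension the off-diagonal contribution is governed by $\frac{1}{N}\sum_{n=1}^N e(mn\alpha)$, which tends to $0$ for irrational $m\alpha$ and is captured by the coincidence counts $h_q$ for rational $\alpha$; the Alon--Peres bookkeeping (Proposition~\ref{pro1}, Lemma~\ref{cor1} with $n=1$) then closes without further difficulty. Once you insert this positivity step, your sketch becomes a correct proof and is essentially the one in \cite{bp}.
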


Namely, there exists a sequence $\{n_i\}$ with density 1 in $\mathbb N$, such that$$\lim_{n_i}n_i\cdot d^H\left(\bigcup_{k=0}^{n_i-1}T_{\frac{1}{n_i}}^kX,S^1\right)=0.$$This is because if $E_nX$ is $\epsilon$-dense, then its preimage $E_n^{-1}(E_nX)$ is $\frac{\epsilon}{n}$-dense, and the fact$$E_n^{-1}(E_nX)=\bigcup_kT_{\frac{1}{n}}^kX.$$

\begin{theorem}[Theorem III, \cite{ds}]\label{c2}
Let $Q$ be an increasing sequence of integers with a positive lower density. Let $a_1,a_2,\cdots$ be a sequence of positive numbers such that $\sum_qa_q=\infty$, and for some real number $c$, $\frac{a_q}{q^c}$ is a decreasing function of $q$. Then for almost all $x$, there exist arbitrarily many relative prime $p$ and $q$, such that $$\left|x-\frac{p}{q}\right|\le \frac{a_q}{q}, q\in Q.$$
\end{theorem}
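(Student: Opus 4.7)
The plan is to realize the set of $x$ satisfying the conclusion as a limsup of arithmetic ``approximation sets'' and to show it has full measure via a Borel--Cantelli argument with quasi-independence. For each $q\ge 1$, put
$$E_q := \bigcup_{\substack{0\le p\le q\\ \gcd(p,q)=1}} \Bigl[\tfrac{p}{q}-\tfrac{a_q}{q},\tfrac{p}{q}+\tfrac{a_q}{q}\Bigr]\cap[0,1],$$
so that $|E_q| = 2a_q\phi(q)/q$ once the intervals are disjoint (if $a_q\ge 1/2$ then $E_q=[0,1]$ and may be discarded). The conclusion of the theorem is exactly $|\limsup_{q\in Q} E_q|=1$.

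The first step is to show $\sum_{q\in Q}|E_q|=\infty$. The only available inputs are $\sum_q a_q=\infty$, positive lower density of $Q$, and monotonicity of $a_q/q^c$. Writing $a_q = (a_q/q^c)\cdot q^c$ and applying Abel summation against the partial sums $\sum_{q\le N,\, q\in Q} q^{c-1}\phi(q)$---which, by the classical asymptotic $\sum_{q\le N} q^{c-1}\phi(q) \sim \frac{6}{\pi^2(c+1)}N^{c+1}$ (valid when averaged over $Q$ thanks to its positive lower density)---one can bound
$$\sum_{\substack{q\in Q\\ q\le N}}\frac{a_q\phi(q)}{q}\;\ge\;c'\sum_{q\le N} a_q,$$
whose right side diverges. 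The monotonicity of $a_q/q^c$ is exactly what lets one pass from an unweighted sum to the Euler-weighted sum without losing divergence.

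The main obstacle is the quasi-independence estimate: for $q_1\ne q_2\in Q$,
$$|E_{q_1}\cap E_{q_2}|\;\le\;C\,|E_{q_1}|\,|E_{q_2}|$$
on average over pairs. Two intervals $[p_1/q_1\pm a_{q_1}/q_1]$ and $[p_2/q_2\pm a_{q_2}/q_2]$ intersect when $|p_1 q_2 - p_2 q_1|\le a_{q_1}q_2 + a_{q_2}q_1$, and the integer $p_1 q_2 - p_2 q_1$ is constrained by $\gcd(p_i,q_i)=1$ only up to a factor of $\gcd(q_1,q_2)$. A Duffin--Schaeffer-style count groups pairs $(q_1,q_2)$ by $d=\gcd(q_1,q_2)$, bounds the number of coprime $(p_1,p_2)$ with $|p_1q_2-p_2q_1|=k$ for each $k$, and uses monotonicity of $a_q/q^c$ to control ratios $a_{q_1}/a_{q_2}$ by $(q_1/q_2)^c$. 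The resulting double sum is summable over the large-$d$ ``irregular'' pairs and produces the quasi-independence bound for the ``regular'' bulk.

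With divergence and quasi-independence in hand, the Chung--Erd\H{o}s form of Borel--Cantelli gives $|\limsup_{q\in Q} E_q|>0$, and a standard Cassels--Gallagher-type zero-one law for limsups of periodic arithmetic intervals upgrades this to full measure. The hardest step is certainly the quasi-independence estimate: making the counting lemma work uniformly in $\gcd(q_1,q_2)$ and verifying that the monotonicity hypothesis on $a_q/q^c$ is indeed strong enough to absorb all the error terms coming from the irregular pairs is the technical heart of the proof.
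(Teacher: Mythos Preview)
This theorem is not proved in the paper; it is quoted as Theorem~III of Duffin--Schaeffer \cite{ds} and used as a black box in the proof of Theorem~\ref{t1}. There is therefore no proof in the paper to compare your proposal against.

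That said, your outline does follow the Duffin--Schaeffer strategy, which became the standard template in metric Diophantine approximation: realize the target set as $\limsup_{q\in Q} E_q$, prove divergence of $\sum_{q\in Q} |E_q|$, establish an average pairwise quasi-independence bound $|E_{q_1}\cap E_{q_2}|\lesssim |E_{q_1}|\,|E_{q_2}|$, and conclude by a second-moment Borel--Cantelli argument together with a zero--one law. Two remarks. First, the Chung--Erd\H{o}s inequality (1952) and the Cassels/Gallagher zero--one laws (1950, 1961) postdate \cite{ds}, so the original paper cannot cite them by name; the underlying second-moment computation is, however, already present there in equivalent form. Second, your divergence step has a genuine soft spot: positive lower density of $Q$ does not immediately give $\sum_{q\in Q,\,q\le N} q^{c-1}\phi(q)\gtrsim N^{c+1}$, since $\phi(q)/q$ is not uniformly bounded below. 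One must additionally argue that the integers with $\phi(q)/q<\epsilon$ form a set of density zero (which follows from Mertens-type estimates), so that a positive-density $Q$ still captures a fixed proportion of the Euler-weighted sum. This is not difficult, but it is a real gap in the sketch as written, and it is exactly the place where the monotonicity hypothesis on $a_q/q^c$ earns its keep via Abel summation.
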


We are now ready to give the proof.

\begin{proof}[Proof of Theorem \ref{t1}]
Let $X$ be an arbitrary infinite subset of $S^1$, then by Theorem \ref{c1}, there is an increasing sequence $\{n_i\}$ of natural numbers with density 1 in $\mathbb N$, such that $\mathbb N$, such that
\begin{equation}\label{e1}
\lim_{n_i}n_i\cdot d^H\left(\bigcup_{k=0}^{n_i-1}T_{\frac{1}{n_i}}^kX,S^1\right)=0.
\end{equation}
Now apply Theorem \ref{c2} with $Q=\{n_i\}$ and $a_q=\frac{1}{q\log q}$, we have that there is a subset $U\subset S^1$ of full measure, such that for any $\alpha\in U$, there exist infinitely many $n_i\in Q$, such that 
\begin{equation}\label{e2}
\left|\alpha-\frac{p}{n_i}\right|\le \frac{1}{n_i^2\log n_i}\text{ for some }p, (p,n_i)=1.\end{equation}

Now for any $\alpha\in U$, we claim that $$\liminf_nn\cdot d^H\left(\bigcup_{k=0}^{n-1}T_{\alpha}^kX,S^1\right)=0.$$Let $k_i$ be the subsequence of $Q$ such that (\ref{e2}) holds for $\alpha$. Then we have
\begin{eqnarray}
k_id^H\left(\bigcup_{j=0}^{k_i-1}T_{\alpha}^jX,S^1\right)&\le& k_id^H\left(\bigcup_{j=0}^{k_i-1}T_{\alpha}^jX,\bigcup_{j=0}^{k_i-1}T_{\frac{p}{k_i}}^jX\right)+k_id^H\left(\bigcup_{j=0}^{k_i-1}T_{\frac{p}{k_i}}^jX,S^1\right)\nonumber \\
&\le &k_i^2\left|\alpha-\frac{p}{k_i}\right|+k_id^H\left(\bigcup_{j=0}^{k_i-1}T_{\frac{p}{k_i}}^jX,S^1\right)\nonumber \\
&\le & \frac{1}{\log k_i}+k_id^H\left(\bigcup_{j=0}^{k_i-1}T_{\frac{p}{k_i}}^jX,S^1\right).\nonumber 
\end{eqnarray}
Combine the above inequality with (\ref{e1}), the claim follows.
\end{proof}

We can use the same idea to prove the following
\begin{theorem}\label{tt1}
If $A_i$ is an infinite subset of $S^1$ for $i=1,2$, then for almost every $\alpha$, $$\liminf_nn\cdot d^H\left(\bigcup_{k=0}^{n-1}T_{\alpha}^kA_1,\bigcup_{k=0}^{n-1}T_{\alpha}^kA_2\right)=0.$$
\end{theorem}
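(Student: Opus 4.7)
The plan is to run the argument of Theorem \ref{t1} in parallel for $A_1$ and $A_2$, using the triangle inequality for $d^H$ to interpose the orbit union of a common rational rotation, and then all of $S^1$, between the two $T_\alpha$-orbit unions. The only genuine modification is that we need a single subsequence of denominators along which Theorem \ref{c1} gives quantitative density for \emph{both} $A_1$ and $A_2$ simultaneously, and along which the Khintchine-type Theorem \ref{c2} can still be applied.

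First I would apply Theorem \ref{c1} to each $A_i$ separately, producing density-one subsets $Q_i \subset \mathbb{N}$ with
$$\lim_{n \to \infty,\, n \in Q_i} n \cdot d^H\!\left(\bigcup_{k=0}^{n-1} T_{1/n}^k A_i,\, S^1\right) = 0, \qquad i = 1,2.$$
Setting $Q := Q_1 \cap Q_2$, this still has density one and in particular positive lower density. Applying Theorem \ref{c2} to $Q$ with $a_q = \frac{1}{q \log q}$ then produces a full-measure set $U \subset \mathbb{R}$ of $\alpha$'s admitting infinitely many coprime approximants $p_\ell/k_\ell$ with $k_\ell \in Q$ and $\left|\alpha - p_\ell/k_\ell\right| \le 1/(k_\ell^2 \log k_\ell)$.

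Fix $\alpha \in U$ with approximants as above. Just as in the proof of Theorem \ref{t1}, the triangle inequality, applied first through the two rational orbit unions and then through $S^1$, yields
$$k_\ell\, d^H\!\left(\bigcup_{j=0}^{k_\ell - 1} T_\alpha^j A_1,\, \bigcup_{j=0}^{k_\ell - 1} T_\alpha^j A_2\right) \le \sum_{i=1}^{2} k_\ell\, d^H\!\left(\bigcup_j T_\alpha^j A_i,\, \bigcup_j T_{p_\ell/k_\ell}^j A_i\right) + \sum_{i=1}^{2} k_\ell\, d^H\!\left(\bigcup_j T_{p_\ell/k_\ell}^j A_i,\, S^1\right).$$
The first pair of terms is bounded by $2k_\ell^2 \left|\alpha - p_\ell/k_\ell\right| \le 2/\log k_\ell$, since $T_\alpha^j x - T_{p_\ell/k_\ell}^j x = j(\alpha - p_\ell/k_\ell)$ for $0 \le j \le k_\ell - 1$; the second pair tends to zero along $\ell$ by the construction of $Q$. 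Letting $\ell \to \infty$ yields the desired liminf equal to zero.

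The only point requiring attention is the coupling in the first step: Theorem \ref{c2} requires a target sequence of positive lower density, while Theorem \ref{c1} produces for each of $A_1, A_2$ only a density-one sequence. Since density one is preserved under finite intersections, $Q_1 \cap Q_2$ is again density-one, so no finer Borel--Cantelli style diagonalization is needed. Everything else amounts to the same triangle-inequality bookkeeping as in the proof of Theorem \ref{t1}.
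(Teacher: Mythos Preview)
Your proposal is correct and follows exactly the approach sketched in the paper: intersect the two density-one sequences coming from Theorem~\ref{c1} (the paper simply asserts the existence of a single density-one sequence working for both $A_1$ and $A_2$), apply Theorem~\ref{c2} to the intersection, and finish with the same triangle-inequality computation as in Theorem~\ref{t1}. You have in fact written out the details that the paper omits.
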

\begin{proof}
Note that, by Theorem \ref{c1}, there is an increasing sequence $\{n_i\}$ of natural numbers with density 1 in $\mathbb N$, such that $\mathbb N$, such that (\ref{e1}) holds for both $A_1$ and $A_2$. Using the triangle inequality, the rest of the argument will be essentially the same as the proof of Theorem \ref{t1}. We omit the details here.
\end{proof}

\begin{proof}[Proof of Corollary \ref{t2}]
The proof follows from Proposition \ref{ab1} and the fact in \cite{ks}. We give the details here.

Let $X$ be an arbitrary infinite subset of $I=[0,1]$. We can also think of $X$ as a subset of $[0,1+\beta-\alpha]$. Thus by Theorem \ref{t1}, for fixed $\beta-\alpha:=\ell$, and generic $\alpha\in (0,1-\ell)$ (Namely, $\frac{1-\alpha}{1+\beta-\alpha}$ satisfies a generic condition), then $$\liminf_nn\cdot d^H\left(\bigcup_{k=0}^{n-1}T_{1-\alpha}^kX,[0,1+\beta-\alpha]\right)=0.$$
Now note that $$\left(\bigcup_{k=0}^{n-1}T_{1-\alpha}^kX\right)\cap I \subset \bigcup_{k=0}^{n-1}P_{\alpha,\beta}^kX,$$
hence by Proposition \ref{ab1},
$$\liminf_nn\cdot d^H\left(\bigcup_{k=0}^{n-1}P_{\alpha,\beta}^kX,I\right)=0.$$
By applying Fubini's Theorem on the pair $(\alpha,\ell)$, the claim follows.
\end{proof}

\begin{proof}[Proof of Theorem \ref{t3}]
If $\alpha\in\mathbb Q$, then it suffices to let $X=[0,\epsilon]$ for $\epsilon>0$ small enough. 

Assume that $\alpha\notin \mathbb Q$. Let the integers $a_k$ be the coefficients of the continued fraction of $\alpha$, and $\frac{p_k}{q_k}$ be the partial convergence. Let $||q\alpha||=dist(q\alpha,\mathbb Z)$, $|x|$ be the fraction part of $x$. Then 
\begin{itemize}
\item[(1)] $\frac{1}{q_{k+1}+q_k}<||q_k\alpha||<\frac{1}{a_{k+1}q_k}$, 
\item[(2)] $d^H(\bigcup_{i=0}^{q_{k+1}-1}\{|i\alpha|\},S^1)\ge ||q_k\alpha||$.
\end{itemize}
From (1) and (2), it is easy to see that 
\begin{equation}\label{e3}
\liminf_nn\cdot d^H\left(\bigcup_{k=0}^n\{T^k(0)\},S^1\right)>0.
\end{equation}

Choose an increasing subsequence $\{n_k\}$ of $\mathbb N$, such that $\log\log\log(q_{n_{k+1}})\ge q_{n_k}$, and $0<|q_{n_{k+1}}\alpha|<|q_{n_k}\alpha|$. Let $X=\{|q_{n_k}\alpha|:k\in \mathbb N\}\cup\{0\}$. We claim that $$\liminf_nn\cdot d^H\left(\bigcup_{k=0}^{n-1}T_{\alpha}^kX,S^1\right)>0.$$

Let $n$ be any integer, and $k$ such that $q_{n_k}-q_{n_{k-1}}\le n\le q_{n_{k+1}}-q_{n_k}$. Then $$\bigcup_{i=0}^{n-1}T_{\alpha}^iX\subset \bigcup_{i=0}^{q_{n_k}+n}\{T_{\alpha}^i(0)\}\cup\bigcup_{i=0}^{n-1}\{T_{\alpha}^{i+q_{n_{k+1}}}(0)\}\cup\bigcup_{j=k+2}^\infty\bigcup_{i=0}^{n-1}\{T_{\alpha}^{i+q_{n_{j}}}(0)\}.$$
Now by the choice of $n_k$, when $j\ge k+2$, $\bigcup_{i=0}^{n-1}\{T_{\alpha}^{i+q_{n_{j}}}(0)\}$ is at least $|q_{n_{k+2}}\alpha|$ close to $\bigcup_{i=0}^{n-1}\{T_{\alpha}^{i}(0)\}$ because $|q_{n_j}\alpha|\ll|q_{n_{k+1}}\alpha|$. Hence there is a positive constant $c$, such that 
\begin{eqnarray}
d^H\left(\bigcup_{i=0}^{n-1}T_{\alpha}^iX,S^1\right)&\ge& cd^H\left(\bigcup_{i=0}^{q_{n_k}+n}\{T_{\alpha}^i(0)\}\cup\bigcup_{i=0}^{n-1}\{T_{\alpha}^{i+q_{n_{k+1}}}(0)\},S^1\right)\nonumber\\
&\ge& \frac{1}{2}cd^H\left(\bigcup_{i=0}^{q_{n_k}+n}\{T_{\alpha}^i(0)\},S^1\right)\nonumber\\&\ge& \frac{1}{2}cd^H\left(\bigcup_{i=0}^{3n}\{T_{\alpha}^i(0)\},S^1\right).\nonumber
\end{eqnarray}
Thus $$n\cdot d^H\left(\bigcup_{i=0}^{n-1}T_{\alpha}^iX,S^1\right)\ge \frac{1}{2}cn\cdot d^H\left(\bigcup_{i=0}^{3n}\{T_{\alpha}^i(0)\},S^1\right).$$
Hence by (\ref{e3}), $$\liminf_nn\cdot d^H\left(\bigcup_{i=0}^{n-1}T_{\alpha}^iX,S^1\right)\ge \frac{c}{6}\liminf_n3n\cdot d^H\left(\bigcup_{i=0}^{3n}\{T_{\alpha}^i(0)\},S^1\right)>0.$$
\end{proof}

\section{Concluding remarks}

We are not aware of a direct proof of Theorem \ref{t2} without using the property of circle rotation. It will be good to find one, as it possibly can be generalized to prove that for all IETs. In fact, another motivation of this work is the following two conjectures.
\begin{conjecture}
If $X$ is an infinite subset of $I=[0,1]$, then for a.e. IET $T$, $$\inf_n d^H(T^nX,I)=0.$$
\end{conjecture}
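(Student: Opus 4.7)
The plan is to extend the strategy of Corollary \ref{t2} from 3-IETs to general $k$-IETs. The 3-IET proof represented $P_{\alpha,\beta}$ as the first-return of a circle rotation on a longer interval, so that Theorem \ref{t1} and Proposition \ref{ab1} could be applied. For $k\ge 4$ there is no rotation representation, but by Veech's zippered rectangles construction every $k$-IET arises as the first-return map of a straight-line flow $\phi_t$ on a translation surface $M$, and by Masur--Veech this flow is uniquely ergodic for a.e. parameter. The strategy I would pursue is therefore to first establish an analog of Theorem \ref{t1} for infinite subsets on transversals of $\phi_t$, and then to transfer the conclusion to the IET via a Proposition \ref{ab1}-type inclusion.

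Concretely, I would try to show: for a.e. $M$ in a given stratum and any infinite subset $X$ of a transversal $\tau$, there exist arbitrarily large $t$ such that $\phi_t(X)$ is $\epsilon$-dense in $M$. The argument would mirror the proof of Theorem \ref{t1}, replacing the density-one sequence of dilations produced by Theorem \ref{c1} by a density-one sequence of Teichm\"uller renormalization times at which the vertical flow is close to a vertical flow with only short closed orbits, and then invoking a Diophantine approximation statement, playing the role of Theorem \ref{c2}, to push the approximation back to the original surface. Intersecting $\phi_t(X)$ with $\tau$ would yield iterates $T^n X$ of the induced IET whose union is dense; by choosing $t$ near a return time common to many points of $X$ one would then try to recover density of a single iterate.

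The main obstacle is the surface analog of Theorem \ref{c2}. The Duffin--Schaeffer-type statement used in Theorem \ref{t1} supplies infinitely many rational approximations to a.e.\ $\alpha$ with essentially optimal rate; the required analog is a Diophantine statement about how well a.e. translation surface is approximated, at the level of Rauzy--Veech renormalization, by surfaces whose vertical flow has a short closed orbit. Quantitative results in this direction exist but it is unclear whether they suffice in the Hausdorff semimetric. A more elementary route, which I would attempt in parallel, works directly on $[0,1]$: for a.e.\ $T$, pick a Rauzy--Veech renormalization level $N$ such that $X$ has many points in the base $J_N$ of the Rohlin tower, and use a Roth-type Diophantine condition on the Rauzy--Veech orbit of $T$ (full measure, by Marmi--Moussa--Yoccoz) to choose an iterate $n$ that spreads $X\cap J_N$ evenly across the tower. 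The hard step in both approaches is arranging a single iterate --- rather than a union of iterates --- to fill the tower, since the results already in this note (Theorems \ref{t1}, \ref{t3} and Corollary \ref{t2}) only control unions.
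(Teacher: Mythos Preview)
The statement you are attempting to prove is not proved in the paper: it is Conjecture~5.1, listed in the ``Concluding remarks'' section as an open problem. There is therefore no paper proof to compare your proposal against. The paper explicitly says it is ``not aware of a direct proof of Theorem~\ref{t2} without using the property of circle rotation'' and that finding one ``possibly can be generalized to prove that for all IETs''; the conjecture is precisely that hoped-for generalization.

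As for the proposal itself: you have correctly located the central difficulty, namely that all the positive results in the paper (Theorem~\ref{t1}, Corollary~\ref{t2}) control the density of \emph{unions} $\bigcup_{k<n}T^kX$, whereas Conjecture~5.1 asks for density of a \emph{single} iterate $T^nX$. Your suggested fix---``choosing $t$ near a return time common to many points of $X$''---is the step that does not go through. Distinct points of $X$ on a transversal $\tau$ generically have distinct first-return times to $\tau$, and there is no single flow time $t$ at which $\phi_t(X)$ lands back in $\tau$; intersecting $\phi_t(X)$ with $\tau$ produces a subset of $\bigcup_{k\le n}T^kX$, not of any single $T^nX$. Likewise the Rohlin-tower argument you sketch produces points at many different levels of the tower, which again corresponds to a union of iterates. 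So the proposal, as written, would at best recover an analogue of Conjecture~5.2 (the Q.D.\ conjecture), not Conjecture~5.1. The passage from ``some union of iterates is dense'' to ``some single iterate is dense'' is exactly what is open, and nothing in the zippered-rectangle or Rauzy--Veech machinery you invoke supplies it.
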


\begin{conjecture}
If $X$ is an infinite subset of $I$, then for a.e. IET $T$, $$\inf_nn\cdot d^H\left(\bigcup_{k=0}^{n-1}T^kX,I\right)=0.$$
\end{conjecture}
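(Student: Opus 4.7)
The plan is to mimic the proof of Theorem \ref{t1} (and its transfer to 3-IETs in Corollary \ref{t2}), replacing circle rotations by IETs and rational approximations by approximations coming from Rauzy--Veech induction. That proof rested on two ingredients: a quantitative Glasner-type statement at ``rational'' parameters (the identity $E_n^{-1}(E_n X) = \bigcup_{k=0}^{n-1} T_{1/n}^k X$ together with Theorem \ref{c1} makes $\bigcup_{k<n} T_{p/n}^k X$ an $o(1/n)$-dense set), and a Diophantine approximation of typical irrationals by these rationals (Theorem \ref{c2}). For general IETs the natural substitute for rational rotations is the class of \emph{Rauzy--Veech periodic} IETs, and the substitute for Theorem \ref{c2} should come from recurrence of the Rauzy--Veech cocycle on the space of IETs.

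First, I would produce, for Lebesgue-typical length data, a sequence of Rauzy--Veech periodic IETs $T_k$ with Rauzy--Veech periods $q_k \to \infty$ approximating $T$ with the quantitative control
$$\sup_{0\le j \le q_k}\, d\bigl(T^j x,\, T_k^j x\bigr) = o(1/q_k)$$
for all $x$ outside a small exceptional set. This is the IET analog of $|\alpha - p/n_i| \le 1/(n_i^2 \log n_i)$; it should follow from distortion estimates for the Rauzy--Veech cocycle (in the spirit of Avila--Gou\"ezel--Yoccoz) combined with a Borel--Cantelli argument on a full-measure set of length data.

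Second, I would try to establish a Glasner-type density statement for the periodic models: for any infinite $X \subset I$,
$$\liminf_k\, q_k \cdot d^H\Bigl(\bigcup_{j=0}^{q_k-1} T_k^j X,\ I\Bigr) = 0.$$
The geometric reason to expect this is that one full Rauzy--Veech period of $T_k$ is the restriction of a self-similarity whose linearization is the Rauzy--Veech matrix $M_k$, of norm $\asymp q_k$. The hope is to apply a Glasner-type dilation result (for instance the higher-dimensional generalization in \cite{kl}) to $M_k$ on the renormalized fundamental domain, and then pull back by $M_k^{-1}$, mimicking the identity $E_n^{-1}(E_n X) = \bigcup_k T_{1/n}^k X$ from the circle case. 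With these two steps in hand, the conclusion follows by the same triangle-inequality calculation as at the end of the proof of Theorem \ref{t1}: the density of $\bigcup_{j<q_k} T^j X$ is dominated by the density of $\bigcup_{j<q_k} T_k^j X$ plus $q_k \cdot \sup_j d(T^j x, T_k^j x) = o(1)$, yielding $\liminf_n n \cdot d^H(\bigcup_{j<n} T^j X, I) = 0$.

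The main obstacle is the Glasner-type statement for the renormalized IETs $T_k$. For circle rotations the identity $E_n^{-1}(E_n X) = \bigcup_k T_{1/n}^k X$ is exact because multiplication by $n$ is a group homomorphism, so Theorem \ref{c1} transfers cleanly. In the IET setting, Rauzy--Veech renormalization is only piecewise linear, the ``expanded'' domain carries IET-type gluings rather than being a flat torus, and $M_k$ acts on the simplex of length data rather than on $\mathbb T^n$, so \cite{kl} does not apply directly. Finding the correct dilation-type density statement in this piecewise-linear, non-group setting --- or bypassing it via a direct argument on the Teichm\"uller flow, such as an equidistribution of long horocycle pieces applied to the infinite subset $X$ --- is the step I expect to be genuinely difficult, and is essentially the reason this is stated only as a conjecture.
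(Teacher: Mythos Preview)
This statement is \emph{Conjecture 5.2} in the paper; the author does not prove it and explicitly lists it among the open problems motivating the work. Indeed, in the concluding remarks the author writes that he is ``not aware of a direct proof of [Corollary~\ref{t2}] without using the property of circle rotation,'' and that finding one ``possibly can be generalized to prove that for all IETs.'' So there is no proof in the paper to compare your proposal against.

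Your outline is a reasonable strategy, and you have correctly located the obstruction. Let me sharpen it. In the circle argument, the identity $E_n^{-1}(E_n X)=\bigcup_{k<n} T_{1/n}^k X$ works because $E_n$ is a group endomorphism of $S^1$ and the rotations $T_{j/n}$ enumerate its kernel; Theorem~\ref{c1} then supplies density of $E_n X$ for a density-$1$ set of integers $n$ that is \emph{independent of the target rotation $\alpha$}, and Theorem~\ref{c2} lets one choose $n$ simultaneously in this set and Diophantine-close to $\alpha$. For a general $d$-IET there is no ambient semigroup of endomorphisms playing the role of $\{E_n\}$: Rauzy--Veech induction produces a \emph{single} sequence of cocycle matrices attached to $T$, not a semigroup acting on $I$ to which one could first apply a Glasner-type theorem and then intersect with a Diophantine condition. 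Moreover, whatever ``Rauzy--Veech periodic'' approximants $T_k$ you take, the set $\bigcup_{j<q_k} T_k^j X$ is just the union of finitely many periodic orbits meeting $X$, and absent an external dilation mechanism there is no evident reason this should be $o(1/q_k)$-dense even for infinite $X$. So your second step lacks not only a proof but even a clean candidate statement. Your closing sentence is honest and accurate: the gap you isolate is precisely why the paper records this as a conjecture rather than a theorem.
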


It is also natural to ask
\begin{question}
If $X$ is an infinite subset of $I$, is it true that for a.e. IET $T$, $$\sup_nn\cdot d^H\left(\bigcup_{k=0}^{n-1}T^kX,I\right)=0?$$
\end{question}

As we illustrated before, it will be interesting to know whether one can find a $\mathbb Z$ action with Glasner property or Q.D. property. It will be surprising to really have such an example with either property. We hope to make further progress towards these questions in the subsequent work.

\section*{Acknowledgements} The author is grateful to Federico Rodriguez Hertz for drawing his attention to cw-expansivity and some discussions on the existence of Glasner property for $\mathbb Z$ action, and to Zhiren Wang for carefully reading a preliminary draft.

%
%
%
%
%
%
%

\text{\quad}\\

\textsc{Department of Mathematics, The Pennsylvania State University, University Park, PA 16802, USA}

{Email: cud159@psu.edu}

\end{document}